\newtheorem{theorem}{Theorem}
\newtheorem{lemma}{Lemma}
\newtheorem{definition}{Definition}
\newcommand{\fine}{\hfill $\Box$}         
\newenvironment{proof}{
\begin{trivlist}
\item[\hspace{\labelsep}{\sc\noindent\textsc{Proof.} }]
}{\fine\end{trivlist}}
\newenvironment{proof of}[1]{
\begin{trivlist}
\item[\hspace{\labelsep}{\sc\noindent\textsc{Proof of #1.}}]
}{\fine\end{trivlist}}
\newcommand{\mle}{{\hat\theta}}
\newcommand{\edr}{\mathrm{e}}
\newcommand{\Prob}{{\rm P}}
\newcommand{\E}{{\rm E}}
\newcommand{\RR}{{\mathbb R}}
\newcommand{\equitailed}{equal tailed\xspace}
\newcommand{\half}{\Small{\frac{1}{2}}}
\newcommand{\Small}[1]{\textstyle #1 \displaystyle}
\newcommand{\comillas}[1]{``\,#1\,''}
\definecolor{iblue}{rgb}{0.1,0,0.75}
\definecolor{ired}{rgb}{0.9,0,0.1}
\newcommand{\argmin}{\operatornamewithlimits{arg\,min}}
\title{\bf Confidence distributions from likelihoods by median bias correction \protect}
\author{\Large{Pierpaolo De Blasi}\footnote{Supported by the European Research Council (ERC) through St
G "N-BNP" 306406.}\\
University of Torino 
and Collegio Carlo Alberto
\and \Large{Tore Schweder}\\
Department of Economics, University of Oslo, Norway }
\date{\today}
\begin{document}

\maketitle

\begin{abstract}
\noindent By the modified directed likelihood, higher order accurate confidence limits for a scalar parameter are obtained from the likelihood. They are conveniently described in terms of a confidence distribution, that is a sample dependent distribution function on the parameter space. In this paper we explore a different route to accurate confidence limits via tail--symmetric confidence curves, that is curves that describe \equitailed intervals at any level. Instead of modifying the directed likelihood, we consider inversion of the log-likelihood ratio when evaluated at the median of the maximum likelihood estimator. This is shown to provide \equitailed intervals, and thus an exact confidence distribution, to the third-order of approximation in regular one-dimensional models. Median bias correction also provides an alternative approximation to the modified directed likelihood which holds up to the second order in exponential families.

\medskip\noindent
\textbf{Keywords:} Asymptotic expansion; Confidence curve; Confidence distribution; Exponential family; Modified directed likelihood; Normal transformation family.
\end{abstract}


\section{Introduction}\label{section:1}

The level of reported confidence intervals are most often  95\%, with equal probability of missing the target at both sides. Sometimes other levels are used, but  rarely are several intervals at their different levels reported in applied work. Instead of only reporting one confidence interval we suggest to report a family of nested confidence intervals for parameters of primary interest. The family is  indexed by the confidence level $1-\alpha$ for $\alpha \in (0,\,1)$ and is  conveniently represented by what is called a {\it confidence curve}, a quantity introduced by \citet{Bir61} to give a complete picture of the estimation uncertainty.
As an example, take  $\hat \theta \sim \rm{N}(\theta, \,\sigma^2)$ for $\sigma$ known. It yields the curve $cc(\theta)=|1-2\Phi((\theta-\hat \theta)/\sigma)|$ for $\Phi(z)$ the cumulative distribution function of a $\rm{N}(0,1)$. This is a confidence curve since, for all $\alpha \in (0,\,1)$, $\{\theta:cc(\theta) \leq 1-\alpha\}=(\hat \theta+\sigma \Phi^{-1}(\alpha/2),\,\hat \theta+\sigma \Phi^{-1}(1-\alpha/2))$ is the respective confidence interval of level $1-\alpha$. In the example the confidence curve has its minimum at $\hat \theta$ which is a point estimate of $\theta$. The normal confidence curve is tail-symmetric, i.e. the probability of missing the parameter to the left equals that to the right and is $\alpha/2$ at level $1-\alpha$. A tail-symmetric confidence curve represents uniquely a confidence distribution, that is confidence curves that describe upper confidence limits. 
Confidence distribution is a term coined by \citet{Cox58} and formally defined in \citet{Sch:Hjo:02}. For scalar parameters the fiducial distributions developed by \citet{Fis30} are confidence distributions. \citet{Ney34} saw that the fiducial distribution leads to confidence intervals. \citet{Cox13} sees confidence distributions as ``simple and interpretable summaries of what can reasonably be learned from the data (and an assumed model)''. Confidence distributions are reviewed  by \citet{Xie11}, and more broadly and with more emphasis on confidence curves by \citet{Sch16}.
In location models and other simple models the confidence distribution is obtained from pivots, e.g. the normal pivot $(\hat \theta-\theta)/\sigma$ in the above example. A canonical pivot is $C(\theta)=C(\theta;\mle)=1-G(\hat\theta,\theta)$ where $G(y,\theta)=\Prob(\hat\theta\leq y;\theta)$ is the distribution function of the maximum likelihood estimator $\hat\theta$, assumed to be absolutely continuous with respect to the Lebesgue measure and non-increasing  in $\theta$. See Section \ref{section:2} for precise definitions and notation. The confidence distribution  $C(\theta)$ is a canonical pivot in the sense of being uniformly distributed on the unit interval
when $\hat\theta$ is distributed according to $\theta$.  
When $\mle$ is a sufficient statistic with monotone likelihood ratio, $C(\theta)$ is also optimal in the Neyman-Pearson sense, that is it describes smaller confidence intervals at a given level when compared to any other confidence distribution for the parameter $\theta$ \citep[Section 5.4]{Sch16}.
An \equitailed confidence curve is readily obtained from $C(\theta)$ by $cc(\theta)=|1-2C(\theta)|$. 

In this paper we shall be concerned with confidence curves obtained from the log-likelihood ratio $w(\theta)$, 
and we shall study the properties of median bias correction. Median bias correction of a confidence curve, proposed by \citet{Sch07}, is a  method to make the resulting confidence curve approximately tail--symmetric. 
In the normal example $w(\theta)=((\hat \theta-\theta)/\sigma)^2$ and the confidence curve mentioned above is also given by $cc(\theta)=Q(w(\theta))$ where $Q$ is the cumulative chi-square distribution function with one degree of freedom. This confidence curve is tail-symmetric, as mentioned, and the confidence interval of level $0$ is the single point $\mle$ which thus has median $\theta$ and is said to be median unbiased. In general $w(\theta)$ hits zero at the maximum likelihood estimator, which might not be median unbiased. Let $\mle$ have median $b(\theta)$. The median bias corrected confidence curve is the confidence curve of the parameter $b(\theta)$. The idea is to probability transform the bias corrected log likelihood ratio $w^* (\theta)=w(b(\theta))$ rather than $w(\theta)$. With $F^*(y;\theta)$ denoting the sampling distribution of $w^*(\theta)$ when the data is distributed according to $\theta$, the bias corrected confidence curve is $cc^*(\theta)=F^*(w^*(\theta);\theta)$. Since $\argmin(cc^*(\theta))=b^{-1}(\mle)$ is median unbiased, the level set at $\alpha=0$  is typically the single point   $b^{-1}(\mle)$ and, by continuity,  $cc^*(\theta)$ is close to be \equitailed at low levels. We undertake a theoretical study of the asymptotic properties of $cc^*(\theta)$ by showing that $cc^*(\theta)$ is third-order tail-symmetric for $(\theta,\mle)$ in the normal deviation range in two important classes of parametric models with parameter dimension one. First, we consider parametric models that belong to the Efron's normal transformation family \citep{Efr82}. Then, we extend the result to regular one dimensional exponential families, where we also discuss the relation between median bias corrected and modified directed likelihood of \citet{Bar86}, thus providing an alternative approximation to the latter. 
Since median bias correction works so well in these cases, it is reasonable to expect the method to work well quite generally. However, when a canonical confidence distribution is available, as in the exponential family models, we do of course not advocate to use median bias correction rather than using the canonical confidence distribution.

The rest of the paper is organized as follows. In Section \ref{section:2}, we recast confidence estimation in terms of confidence curves and introduce the notation we use in the sequel. We also define the confidence curve based on inverting the median bias corrected version of the log-likelihood ratio. In Section \ref{section:3} and \ref{section:4}, we investigate its asymptotic properties in terms of tail symmetry in the Efron's normal transformation family and in one dimensional exponential families, respectively. Finally, in Section \ref{section:5} some concluding remarks and lines of future research are presented, 
together with an example that provides a preliminary illustration of the use of median bias correction in the presence of nuisance parameters.
Some proofs and a technical lemma are deferred to the Appendix.

\section{Likelihood-based confidence curves}\label{section:2}

Let $X=(X_1,\ldots,X_n)$ be a continuous random sample with density $f(x;\theta)$ 
depending on a real parameter $\theta\in\Theta\subset\RR$ and let $\Prob(\,\cdot\,;\theta)$ indicate probabilities calculated under $f(x;\theta)$. The log-likelihood is $\ell(\theta)=\ell(\theta;x)=\log f(x;\theta)$, and the log-likelihood ratio is $w(\theta)=w(\theta;x)=2(\ell(\hat \theta;x)-\ell(\theta;x))$, where $\mle$ is the maximum likelihood estimate. We drop the second argument in sample-dependent functions like $w$ and $\ell$ whenever it is clear from the context whether we refer to a random quantity or to its observed value. Unless otherwise specified, all asymptotic approximations are for $n\to\infty$ and stochastic term $O_p(\cdot)$ 
refers to convergence in probability with respect to $f(x;\theta)$.  We  assume that the model is sufficiently regular for the validity of first order asymptotic theory, cfr. \citet[Chapter 3]{Bar94}. In particular, $w(\theta)$ converges in distribution to a chi-squared random variable, hence, by contouring $w(\theta)$ with respect to this distribution we obtain intervals of $\theta$ values given by the level sets for the curve $Q(w(\theta))$ where $Q$ is the distribution function of the chi-squared distribution with 1 degree of freedom. This curve depends on the sample $x$ and has its minimum at $\mle$. However its level sets are not in general exact confidence intervals since the chi-squared approximation for the distribution of $w(\theta)$ is valid only for $n$ large and the coverage probabilities equal the nominal levels only in the limit. As a consequence, $Q(w(\theta))$ is not uniformly distributed on the unit interval under $P(\,\cdot\,;\theta)$, a property we require for a {\it regular} confidence curve as spelled in the following definition.

%
%
\begin{definition}\label{definition:1}
A function $cc:\Theta\times\RR^n\to[0,1)$ is a regular confidence curve when $\min_\theta\,cc(\theta;x)=0$, the level sets $\{\theta: cc(\theta;x) \leq 1-\alpha\}$ are finite intervals for all $\alpha\in(0,1)$, and $cc(\theta;X)\sim\mbox{Unif}(0,1)$ under $P(\,\cdot\,;\theta)$. 
\end{definition}

Confidence curves might be defined for parameters of higher dimension and also for irregular curves that even might have more than one local minimum or might have infinite level sets for $\alpha<1$, see \citet[Section 4.6]{Sch16}. Note that, under Definition \ref{definition:1}, $I=\{\theta:\,cc(\theta;x)\leq 1-\alpha \}$ is an exact confidence region of level $1-\alpha$ since $\Prob(I\ni\theta;\theta)=\Prob(cc(\theta;X) \leq 1-\alpha;\theta) =1-\alpha$. 
Among confidence curves, of special importance are confidence distributions, which are confidence curves that describe upper confidence limits. The definition is as follows.
%
%
\begin{definition}\label{definition:1b}
A function $C:\Theta\times\RR^n\to[0,1)$ is a confidence distribution when $C(\cdot; x)$ is a cumulative distribution function in $\theta$ for all $x$ and $C(\theta;X)\sim\mbox{Unif}(0,1)$ under $P(\,\cdot\,;\theta)$. 
\end{definition}
Keep in mind that the realized confidence curve and confidence distribution depend on the data, and prior to observation they are random variables (with distribution depending on the parameter value from which the data are generated). To keep the notation simple, we drop the second argument $x$ in $cc(\theta;x)$ and $C(\theta;x)$. Moreover, we will confine ourselves to regular confidence curves 
$cc(\theta)$ with only one local minimum. In this setting $cc(\theta)$ can be transformed into a  distribution via
\begin{equation}\label{eq:H}
  H(\theta)=\Small{\frac{1}{2}}\{1-\mbox{sign}
  (\tilde\theta-\theta)cc(\theta)\},\quad
  \tilde\theta=\arg\min\nolimits_\theta cc(\theta)
\end{equation}  
so that the left and right endpoints of the interval $I=\{\theta:\ cc(\theta)\leq 1-\alpha\}=(\underline\theta,\,\bar\theta)$, are given by
\begin{equation}\label{eq:endpoint}
  \underline\theta
  =H^{-1}\left(\Small{\frac{\alpha}{2}}\right),
  \quad  
  \bar\theta
  =H^{-1}\left(1-\Small{\frac{\alpha}{2}}\right),
\end{equation}  
respectively. 
We refer to $\tilde\theta=H^{-1}(\frac{1}{2})$ as 
the median confidence estimator for $\theta$.
By construction, $\underline\theta$ and $\bar\theta$ satisfy
 $\Prob(\underline\theta>\theta;\theta)+
 \Prob(\bar\theta<\theta;\theta)=\alpha$. We then say that $cc(\theta)$ is tail--symmetric when the interval 
  $(\underline\theta,\bar\theta)$ 
is \equitailed, that is  
  $$\Prob(\underline\theta>\theta;\theta)=
  \Prob(\bar\theta<\theta;\theta)=\alpha/2,\quad
  \forall \alpha\in(0,1).$$ 
This is equivalent to $H(\theta)$ defining a confidence distribution according to Definition \ref{definition:1b}.
%
%
\begin{definition}\label{definition:2}
A confidence curve $cc(\theta)$ is tail--symmetric if
  $H(\theta)$
in \eqref{eq:H} is a confidence distribution according to Definition \ref{definition:1b}.
\end{definition}
The relation obviously works in the other direction: given a confidence distribution $C(\theta)$, 
  $cc(\theta)=1-2\min\{C(\theta),1-C(\theta)\}=|1-2C(\theta)|$
defines a tail-symmetric confidence curve, see \citet{Bir61}. Note that the median confidence estimator $\tilde\theta$ of a tail--symmetric $cc(\theta)$ is median-unbiased, i.e.~$\Prob(\tilde\theta>\theta;\theta)=0.5$. See \citet[Section 5.6]{Leh86}. The relation between median-unbiased estimators and \equitailed intervals have been noted by \citet{Sko89} in connection with the maximum likelihood estimator. 

We now focus on confidence distributions derived from the likelihood. It is convenient to set as the exact confidence distribution the one obtained from the sampling distribution of the maximum likelihood estimator, namely
\begin{equation} \label{eq:CD}
  C(\theta)=1-G(\mle;\theta)\,,
\end{equation}
where we assume that $G$, the distribution function of $\mle$, is continuous and non-increasing in $\theta$. In order to have $C(\theta)$ being a proper cumulative distribution function, it is also required that 
  $\lim_{\theta\downarrow a}G(\mle;\theta)=0$
and
  $\lim_{\theta\uparrow b}G(\mle;\theta)=1$,
where $a$ and $b$ are the infimum and supremum of the parameter space $\Theta$, respectively. 
The $\alpha$-quantile
is denoted by $\mle(\alpha)= C^{-1}(\alpha)$. In particular, $\mle(\half)$ corresponds to the median--unbiased estimator of $\theta$. The exact distribution $G(y;\theta)$ is generally unknown and the asymptotic approximation of the confidence limit $\mle(\alpha)$ has been object of an extensive research which goes beyond first order accuracy. See \citet{DiC96} for a review. Third--order approximations to $G(y;\theta)$, and thus to  $C(\theta)$, can be obtained from the modified directed likelihood of \citet{Bar86}, see Section 4.2 for a discussion. We will instead look for a route to such good approximations by transforming the scale at which the log-likelihood ratio is presented. To this aim, let $F(y;\theta)$  be the  sampling distribution function of $w(\theta)$ under $P(\,\cdot\,;\theta)$, and define
\begin{equation}\label{eq:conf-curve}
  cc(\theta)= F\big( w(\theta);\theta\big)\,.
\end{equation} 
According to \eqref{eq:H}--\eqref{eq:endpoint}, $\underline\theta$ and $\bar\theta$ are the endpoints of a confidence interval of level $1-\alpha$. It is clear that, in general, $cc(\theta)$ is not tail--symmetric according to Definition \ref{definition:2}, in particular when $\mle$ is not median unbiased. 
More generally, the distribution estimator 
\begin{equation} \label{eq:CD_w}
  H(\theta)=\half\{1-\mbox{sign}(\mle-\theta)
  F\left( w(\theta);\theta\right)\}
\end{equation}  
is not uniformly distributed on the unit interval under $P(\,\cdot\,;\theta)$. According to first-order asymptotics, $cc(\theta)$ is tail--symmetric up to the first order of approximation, that is
\begin{equation}\label{eq:first-asymp2}
  H(\theta)=C(\theta)+O_p(n^{-1/2})\,.
\end{equation} 
Consequently
  $\underline\theta=
  \mle(\Small{\frac{\alpha}{2}})+O_p(n^{-1})$
and
  $\bar\theta=
  \mle(1-\Small{\frac{\alpha}{2}})+O_p(n^{-1}),$
and $\Prob(\underline\theta>\theta;\theta)=\Prob(\bar\theta<\theta;\theta)+O(n^{-1/2})$. 

In order to improve on (\ref{eq:first-asymp2}), we consider the median bias  correction to $w(\theta)$
. Let $b(\theta)$ be the median of $\mle$ as function of $\theta$, that is 
  $$G(b(\theta);\theta)=0.5.$$ 
By assumption, $b(\theta)$ is continuously  increasing in $\theta$ and  
  $b^{-1}(\mle)=\mle(\half)$,
as a simple calculation reveals. The {\it median bias corrected log-likelihood ratio} is defined as
\begin{equation}\label{eq:corr-dev}
   w^*(\theta)=w^*(\theta;x)=w\big(b(\theta);x\big)\,,
\end{equation}
By construction, $w^*(\theta)$ attains its minimum at $\mle(\half)$, the median unbiased estimator of $\theta$. Since both the likelihood function $\ell(\theta)$ and the median function $b(\theta)$ are invariant to  monotone parameter transformations, invariance is preserved for $w^*(\theta)$. See \citet{Fir93} for a different type of likelihood correction, aimed at reducing the bias of the maximum likelihood estimator. 
The median bias corrected confidence curve is defined as
\begin{equation}\label{eq:corr-net}
  cc^*(\theta)=F^*\big( w^*(\theta);\theta\big),
\end{equation}
where 
$F^*(y;\theta)$ stands for the sampling distribution of $w^*(\theta)$ under $P(\,\cdot\,;\theta)$. According to \eqref{eq:H}, it yields the distribution estimator
\begin{equation}\label{eq:CD_w*}
  H^*(\theta)=
  \half \left\{1-\mbox{sign}\left(\mle(\half)-\theta\right)
  F^*\big( w^*(\theta);\theta\big)\right\}.
\end{equation}
%
%
For illustration, we consider confidence distributions for the variance parameter $\theta$ in the normal model $N(0,\theta)$.
For $\mle=n^{-1}\sum_{j=1}^n x_i^2$, the log-likelihood ratio is
  $w(\theta)=n(\mle/\theta
  -\log(\mle/\theta)-1)$.
Based on $n\mle/\theta\sim\chi^2_n$, one finds 
  $G(y;\theta)=\Prob(\chi^2_n\leq ny/\theta)$ 
and  
  $b(\theta)=\chi^2_{n,.5}\theta/ n$ (in obvious notation).
Using the pivotal distribution of $n\mle/\theta$, $F(y;\theta)$ and $F^*(y;\theta)$ can be computed via Monte Carlo.
Based on a simulated sample of size $n=10$ with $\theta=4$, 
the left panel of Figure \ref{fig3} displays $cc^*(\theta)$ according to \eqref{eq:corr-net} while the right panel reports $C(\theta)$, $H(\theta)$ and $H^*(\theta)$, according to \eqref{eq:CD}, \eqref{eq:CD_w} and \eqref{eq:CD_w*}, respectively. 
\begin{figure}[!ht]
\begin{center}
 \includegraphics[width=\textwidth]{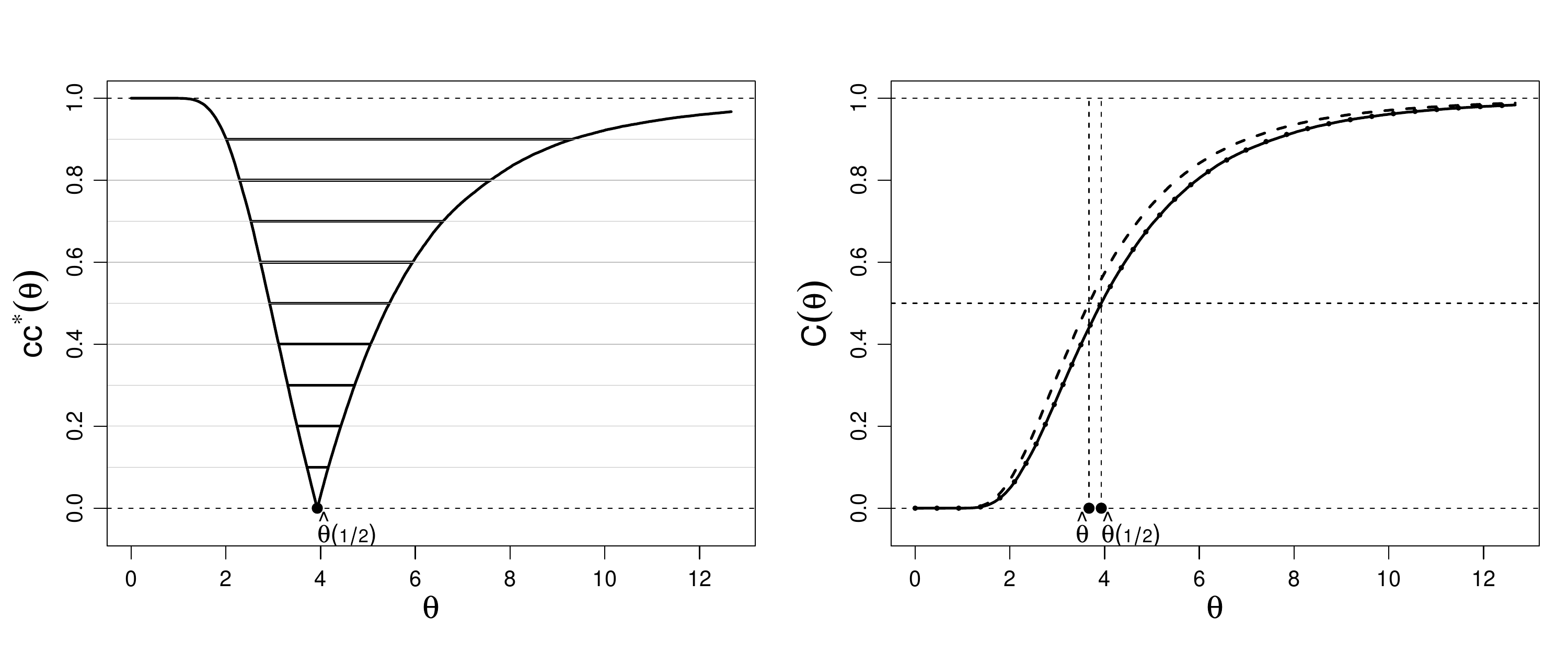}
\caption{\it  
Confidence distributions for $\theta$ in the normal model $N(0,\theta)$ for $n=10$ observations generated according to $\theta=4$. 
Left panel: $cc^*(\theta)$ together with some of its confidence intervals. Right panel: $H(\theta)$ (dashed line), $H^*(\theta)$ (solid line), and nearly on top $C(\theta)$ (dotted line). $cc^*(\theta)$, $H(\theta)$ and $H^*(\theta)$ are based on 50000 Monte Carlo simulations.} \label{fig3}
\end{center}
\end{figure}
%
%
Note that $H^*(\theta)$ and $C(\theta)$ are on top of each other and are almost indistinguishable. Hence, the median correction in \eqref{eq:corr-net}, by making the median confidence estimator of $cc^*(\theta)$ coincide with $\mle(\half)$, shifts the whole curve $H^*(\theta)$ towards $C(\theta)$, thus inducing nearly exact tail symmetry. 
We return to this example in Section \ref{section:4} where we give a theoretical justification to the fact $H^*(\theta)$ and $C(\theta)$ coincide to the third order of approximation in Theorem \ref{theorem:2}.

We conclude this section by noting that, while $w^*$ can be interpreted as the log-likelihood ratio for the parameter $\psi=b^{-1}(\theta)$, that is $w^{(\psi)}(\psi)=w(b(\psi))=w^*(\psi)$, $cc^*(\psi)$ does not correspond to the confidence curve in the $\psi$-parametrization, that is 
  $cc^{(\psi)}(\psi):=F(w^{(\psi)}(\psi;x);\psi)$
where $F(y;\psi)$ stands now for the sampling distribution of the log-likelihood ratio in terms of the $\psi$ parameter. 
As an example, consider the exponential model, $f(x;\theta)=\theta\edr^{-\theta x}$. By standard calculation one finds that $w(\theta)=2n(\theta/\mle-1-\log(\theta/\mle))$, $2n\theta/\mle\sim\chi^2_{2n}$ and $b(\theta)=2n\theta/\chi^2_{2n,.5}$. Hence, for $\psi=\chi^2_{2n,.5}\theta/2n$, we get 
  $cc^*(\psi)=\Prob(2n\,h(X/\chi^2_{2n,.5})\leq w(b(\psi))$
while
  $cc^{(\psi)}(\psi)=\Prob(2n\,h(X/2n)\leq w(b(\psi))$,
where $X\sim \chi^2_{2n}$ and $h(x)=x-1-\log(x)$.
On the other hand, $cc^*(\theta)$ shares with $cc(\theta)$ the property of invariance with respect to monotone transformation of the parameter: if $\psi=g(\theta)$ for $g$ invertible, then, it is easy to see that bias corrected confidence curve in the $\psi$--parametrization, say $cc^{*\,(\psi)}(\psi)$, corresponds to $cc^*(g^{-1}(\psi))$. 
This can be easily verified in the exponential model above by taking, e.g., $g(\theta)=1/\theta$ so that $\psi$ represents the mean parameter. 
In the sequel, for ease of notation, we avoid superscripts as in $w^{(\psi)}$ and $cc^{(\psi)}$ whenever the parametrization the likelihood is referring to will be clear from the context.


\section{Normal transformation family}\label{section:3}
In this section we establish third--order tail symmetry of the bias corrected confidence curve $cc^*(\theta)$ when $\mle$ is a sufficient statistic and belongs to the normal transformation family of \citet{Efr82}. This family of distributions was used by \citet{Efr87} to introduce bias and acceleration corrected bootstrapped confidence intervals that achieve second order accuracy. The idea is that standard intervals are based on assuming that the normal approximation of $(\mle-\theta)/\hat\sigma$ is exact, with $\hat\sigma$ a fixed constant and, hence, convergence to normality can be improved by considering a monotone transformation of $\mle$ and $\theta$ which is exactly normalizing and variance stabilizing. Second order accuracy was later extended to regular statistical models such as the exponential family, see \citet{DiC92}. 
We follow a similar path here, as we first prove, in Theorem \ref{theorem:1}, tail symmetry in the normal transformation family as this case provides a simple illustration of the generalized inverse mapping argument reported in Lemma \ref{lemma:1} of the Appendix. Theorem \ref{theorem:2} of Section \ref{section:4} addresses tail symmetry in the exponential family, where an additional Cornish--Fisher expansion of the distribution of the maximum likelihood estimator is needed. Theorem \ref{theorem:2} is indeed a more general result than Theorem \ref{theorem:1} since, by Pitman-Koopman-Darmois Theorem, cfr. \citet[Theorem 6.18]{Leh:Cas:98}, if the data are independent and identically distributed and the dimension of the sufficient statistic does not depend on $n$, as we are assuming here, then the model is an exponential family.

Let $\mle$ be a sufficient estimator for $\theta$, not necessarily maximizing the likelihood, but behaving asymptotically like the maximum likelihood estimator in terms of order of magnitude of its bias, standard deviation, skewness, and kurtosis: 
\begin{equation}\label{eq:5.1}
  \mle-\theta\sim(B_\theta/n,C_\theta/n^{1/2},
  D_\theta/n^{1/2}, E_\theta/n)\,,
\end{equation}
where $B_\theta,C_\theta,D_\theta$, and $E_\theta$ are functions of $\theta$ and $n$ (the latter suppressed in the notation) bounded in $n$. See equations (5.1)--(5.3) in \citet{Efr87}. 
Next, suppose there exists a monotone increasing transformation $g$ and constants $z_{_0}$ (bias constant) and $a$ (acceleration constant) such that
 $\hat\phi=g(\mle)$
 and
 $\phi=g(\theta)$
satisfy
\begin{equation}\label{eq:norm-transf}
 \hat\phi=\phi+(1+a\phi)(Z-z_{_0}),\qquad Z\sim N(0,1)\,,
\end{equation}
where $\phi>-1/a$ when $a>0$ and $\phi<-1/a$ when $a<0$. 
Model \eqref{eq:norm-transf} has standard deviation linear in $\phi $ on the transformed scale. It provides a pivot with accompanying confidence distribution $C^{(\phi)}(\phi)$. The latter is directly transformed back to a confidence distribution for $\theta$, that is $C^{(\theta)}(\theta)=C^{(\phi)}(g(\theta))$. 
Theorem \ref{theorem:1} states that $cc^{*\,(\phi)}(\phi)$ as well as $cc^{*\,(\theta)}(\theta)$, are third order tail-symmetric according to Definition \ref{definition:2}, an improvement up to $O_p(n^{-3/2})$ in the asymptotic order displayed in \eqref{eq:first-asymp2}. The proof relies on the asymptotic inversion of convex functions reported in Lemma \ref{lemma:1} in the Appendix.
%
%
\begin{theorem}\label{theorem:1}
Let  $\mle$ be a sufficient estimator of $\theta$ based on a sample of size $n$ satisfying \eqref{eq:5.1}, and assume there exists a monotone increasing function $g$ such that \eqref{eq:norm-transf} holds. Then, for $C(\theta)$ and $H^*(\theta)$ defined in \eqref{eq:CD} and \eqref{eq:CD_w*}, respectively,
\begin{equation}\label{eq:second-asymp2}
  H^*(\theta)=C(\theta)
  +O_{p}\big( n^{-3/2}\big),\quad 
  \mbox{for }
  n^{1/2}(\theta-\mle)/C_\theta=O_p(1)\,.
\end{equation}
\end{theorem}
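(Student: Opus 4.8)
The plan is to work entirely on the transformed scale $\phi=g(\theta)$ and establish \eqref{eq:second-asymp2} there, then transport it back to the $\theta$-scale by the invariance of $cc^*$ under the monotone reparametrization $g$, as already noted in Section \ref{section:2}. On the $\phi$-scale the model \eqref{eq:norm-transf} is a genuine normal pivot $Z=(\hat\phi-\phi)/(1+a\phi)+z_{_0}$, so the exact confidence distribution $C^{(\phi)}(\phi)=1-G(\hat\phi;\phi)$ has a closed form through $\Phi$, and the median function $b^{(\phi)}$ solving $G(b^{(\phi)}(\phi);\phi)=1/2$ is explicit. First I would write $w^{(\phi)}(\phi)$, the log-likelihood ratio on the $\phi$-scale, as a function of the normalized deviation $Z-z_{_0}$, using that $\hat\phi$ maximizes the likelihood so $w^{(\phi)}$ is a convex function vanishing at $\hat\phi$; substituting the median-corrected argument $b^{(\phi)}(\phi)$ for $\phi$ yields $w^{*\,(\phi)}(\phi)$.

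The core of the argument is an asymptotic inversion. I would expand $w^{*\,(\phi)}(\phi)$ as a power series in $n^{-1/2}$ about the chi-squared leading term, in the normal deviation range $n^{1/2}(\phi-\hat\phi)=O_p(1)$ dictated by the hypothesis $n^{1/2}(\theta-\mle)/C_\theta=O_p(1)$. The acceleration constant $a$ makes the standard deviation $(1+a\phi)$ depend on $\phi$, which is precisely the source of the non-\equitailed behavior at first order; the bias constant $z_{_0}$ and the median correction $b^{(\phi)}$ together are designed to cancel the leading asymmetry. Invoking Lemma \ref{lemma:1} on the asymptotic inversion of convex functions, I would solve $F^{*\,(\phi)}(w^{*\,(\phi)}(\phi);\phi)=1-\alpha$ for the endpoints and match them against the exact quantiles $\hat\phi(\alpha/2)$, $\hat\phi(1-\alpha/2)$ of $C^{(\phi)}$. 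The key is to show that the $O_p(n^{-1/2})$ and $O_p(n^{-1})$ discrepancies between $H^{*\,(\phi)}$ and $C^{(\phi)}$ vanish, leaving a remainder of order $n^{-3/2}$; equivalently, that the sign-adjusted transform of $w^{*\,(\phi)}$ agrees with $1-G(\hat\phi;\phi)$ through second order in the expansion.

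The main obstacle I expect is bookkeeping the expansion of $w^{*\,(\phi)}$ to the order needed so that both the first-order term (controlled by $a$ and $z_{_0}$) and the second-order term cancel against the corresponding terms in the Edgeworth/Cornish--Fisher expansion of $G(\hat\phi;\phi)$. One must verify that replacing $\phi$ by $b^{(\phi)}(\phi)$ shifts the minimizer of $cc^{*\,(\phi)}$ exactly to the median-unbiased point $\hat\phi(1/2)$, so that the induced asymmetry at levels near $\alpha=0$ is killed, and then propagate this cancellation uniformly across levels $\alpha\in(0,1)$ within the stated deviation range. Because the normal transformation family has an exact normal pivot, the error terms are particularly tractable here, which is why this case serves as the clean illustration of the inversion argument before the exponential-family generalization in Theorem \ref{theorem:2}; the remaining work is to confirm that no $O_p(n^{-1})$ term survives, which reduces to checking that the quadratic coefficient in the inversion agrees on both sides. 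Finally, invariance of $cc^*$ under $g$ gives $H^*(\theta)=H^{*\,(\phi)}(g(\theta))$ and $C(\theta)=C^{(\phi)}(g(\theta))$, transferring \eqref{eq:second-asymp2} from the $\phi$-scale to the $\theta$-scale and completing the proof.
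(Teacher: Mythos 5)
Your overall skeleton (pass to the $\phi$-scale, exploit the exact normal pivot, invoke Lemma \ref{lemma:1}, transfer back by invariance of $cc^*$) is the same as the paper's, but the proposal has three concrete gaps, two of which are fatal as written.

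First, your starting point ``$\hat\phi$ maximizes the likelihood, so $w^{(\phi)}$ vanishes at $\hat\phi$'' is false under the theorem's hypotheses: $\mle$ is only a sufficient estimator satisfying \eqref{eq:5.1}, and on the $\phi$-scale the log-likelihood $\ell(\phi;\hat\phi)=-\big[(\hat\phi-\phi)/(1+a\phi)+z_{_0}\big]^2/2-\log(1+a\phi)$ is maximized not at $\hat\phi$ but at $\hat\phi^c=\hat\phi-c(1+a\hat\phi)$ as in \eqref{eq:c}. Showing that this discrepancy is harmless, i.e.\ $c=a-z_{_0}+O(n^{-3/2})=O(n^{-3/2})$, requires Efron's result $z_{_0}=a[1+O(n^{-1})]$ with $a,z_{_0}=O(n^{-1/2})$; the paper's entire construction ($G$ in \eqref{eq:dfMLE}, the median $b(\phi)$, the convex $w^*$) is attached to $\hat\phi^c$, not to $\hat\phi$, and this bookkeeping is exactly where third-order accuracy is won or lost. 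Relatedly, your deviation range $n^{1/2}(\phi-\hat\phi)=O_p(1)$ is off by a factor $n^{1/2}$: since $g$ is locally linear with scale factor of order $n^{1/2}$, the hypothesis $n^{1/2}(\theta-\mle)/C_\theta=O_p(1)$ corresponds to $(\phi-\hat\phi)/(1+a\phi)=O_p(1)$ as in \eqref{eq:range}, so your expansion would only cover a shrinking neighborhood of $\hat\phi$ and could not yield \eqref{eq:second-asymp2} over the stated range.

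Second, and more importantly, the core mechanism is missing. Your plan --- solve $F^{*}(w^{*}(\phi);\phi)=1-\alpha$ for the endpoints, match against exact quantiles, and check that the $O_p(n^{-1/2})$ and $O_p(n^{-1})$ discrepancies ``cancel'' --- is a restatement of the conclusion, not an argument: it never explains how equal values of $w^*$ get converted into equal tail probabilities of $G$. The paper does this in two moves: (i) define the conjugate point $\hat\phi^*$ by $w^*(\phi;\hat\phi^*)=w^*(\phi;\hat\phi^c)$, so that (for $\hat\phi^c>b(\phi)$) $H^*(\phi)=\frac12\{1-G(\hat\phi^c;\phi)+G(\hat\phi^*;\phi)\}$ and the claim reduces to \eqref{eq:pier}; (ii) use the exact reflection symmetry of the normal distribution about its median, $1-G(\hat\phi^c;\phi)=G(2b(\phi)-\hat\phi^c;\phi)$, so that everything reduces to $\hat\phi^*=2b(\phi)-\hat\phi^c+O_p(n^{-3/2})$, i.e.\ \eqref{eq:second-asymp5}, which is precisely what Lemma \ref{lemma:1} delivers once one computes $f_n^{(2)}(0)=2[(1-ac)^{-2}+a^2]$ and $f_n^{(k)}(0)=(-1)^k2(k-1)!\,a^k$ and uses $a=O(n^{-1/2})$ to verify its hypotheses. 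Your proposal neither introduces the conjugate point nor uses the reflection identity, and your appeal to an Edgeworth/Cornish--Fisher expansion of $G$ is misplaced here: on the $\phi$-scale $G$ is exactly normal, and Cornish--Fisher is the tool needed only for the exponential-family case of Theorem \ref{theorem:2}. Finally, your remark that median unbiasedness of the minimizer kills the asymmetry near $\alpha=0$ and then ``propagates uniformly'' across levels is not an argument; the uniformity over levels is exactly what the Lemma \ref{lemma:1} expansion of the conjugate point provides.
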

%
%
\begin{proof}
Since a confidence curve for $\phi =g( \theta) $ translates into one for $\theta $ for the invertible transformation $g$, it is sufficient to prove \eqref{eq:second-asymp2} in the transformed normal model. Under \eqref{eq:5.1}, the normalizing transformation $g$ is locally linear in its argument with a scale factor of order $n^{1/2}$. In particular, from \eqref{eq:norm-transf}, the normal deviation range $n^{1/2}(\theta-\mle)/C_\theta=O_p(1)$ in \eqref{eq:second-asymp2} corresponds to
\begin{equation}\label{eq:range}
  (\phi-\hat\phi)/(1+a\phi)=O_p(1)\,.
\end{equation}  
According to \citet[Theorem 2]{Efr87}, $z_{0}=a[1+O\left(n^{-1}\right)] $ and both $z_{0}$ and $a$ are $O\big(n^{-1/2}\big)$ as long as the $\mle$ satisfies \eqref{eq:5.1}. We will make repeatedly use of these asymptotic behaviors throughout the proof, even though we suppress the dependence of $a$ and $z_0$ on $n$ in the notation.
The log-likelihood 
  $\ell(\phi;\hat\phi)=
  -\big[(\hat\phi-\phi)/(1+a\phi)
  +z_0\big]^2/2-\log(1+a\phi)$ 
is not maximized at $\phi=\hat\phi$, unless $z_0=a$, rather at 
\begin{equation}\label{eq:c}
  \hat\phi^c=\hat\phi-c(1+a\hat\phi),\quad
  c=\frac{1}{a}-\frac{(1-az_0)}{2a^3}\bigg[
  \bigg(1+\frac{4a^2}{(1- az_0)^2}\bigg)^{1/2}-1\bigg]
\end{equation}
as a simple calculation reveals. One finds that $c=a-z_{_0}+O(n^{-3/2})$ and, consequently, $c=O(n^{-3/2})$.
Actually, $\hat\phi^c$ belongs to the normal transformation family \eqref{eq:norm-transf} since it can be written as
  $\hat\phi^c=\phi+(1-ac)(1+a\phi)(Z-z^c_0)$
for
  $z^c_0=z_0+c/(1-ac)$,
see \citet[Section 11]{Efr87}, with distribution
\begin{equation}\label{eq:dfMLE}
  G(y;\phi)=\Phi\bigg({y-\phi\over(1-ac)(1+a\phi)}+z_0^c\bigg)\,,
\end{equation}  
and median function 
  $b(\phi)=\phi-z^c_0(1-ac)(1+a\phi)$.
Note that $b(\phi)$ is increasing in $\phi$ when $z_0^ca(1-ac)<1$, which we assume without loss of generality since it certainly is for large $n$. Since $\hat\phi^c$ is a sufficient statistic, the log-likelihood ratio for $\phi$ is
 $$w(\phi;\hat\phi^c)=-(z^c_0)^2+
 \bigg(\frac{\hat\phi^c-\phi}{(1-ac)(1+a\phi)}+z^c_0\bigg)^2
 -2\log{1+a\hat\phi^c\over1+a\phi}\,.$$
It is easy to check that $w(\phi;\hat\phi^c)$ is convex in both arguments, and so is its bias corrected version $w^*(\phi;\hat\phi^c)=w(b(\phi);\hat\phi^c)$.
Let $H^*(\phi)$ be defined according to \eqref{eq:CD_w*}. We are interested in expressing $H^*(\phi)$ in terms of tail probabilities associated to $\hat\phi^c$ for comparison with the confidence distribution 
 $C(\phi)
 =1-G( \hat\phi^c;\phi )$.
To this aim, let $\hat\phi^*$ be implicitly defined in function of $\hat\phi^c$ and $\phi$ by $w^{\ast }(\phi ;\hat\phi^*) =w^{\ast }\big( \phi ; \hat\phi^c\big)$. Then, 
  $w^*(\phi;x) \leq w^*(\phi;\hat\phi^c)$ 
for  
  $\hat\phi^*\leq x\leq \hat\phi^c$
when $\hat\phi^c>b(\phi)$, 
for  
  $\hat\phi^c\leq x\leq \hat\phi^*$
when $\hat\phi^c<b(\phi )$. 
We only consider the first case, where the equality of interest is
 $H^*(\phi)=\half\{1-cc^*(\phi)\}
 =\half\{1-G(\hat\phi^c;\phi)+G(\hat\phi^*;\phi)\}.$
Hence, for $\hat\phi^c>b(\phi)$, the normal deviation range $n^{1/2}(\theta-\mle)/C_\theta=O_p(1)$ in \eqref{eq:second-asymp2} corresponds to
\begin{equation}\label{eq:pier}
  G(\hat\phi^*;\phi) =1-G(\hat\phi^c;\phi) 
  +O_p(n^{-3/2})  
\end{equation}  
for $(\phi,\hat\phi)$ in \eqref{eq:range}. As for the right hand side of \eqref{eq:pier}, from \eqref{eq:dfMLE} it follows that, when $\hat\phi^c>b(\phi)$,
  $1-G(\hat\phi^c;\phi)=G(2b(\phi)-\hat\phi^c;\phi)$,
so that \eqref{eq:pier} is implied by
\begin{equation}\label{eq:second-asymp5}
 {\hat\phi^*-\phi\over
 (1-ac)(1+a\phi)}= 
  {(2b(\phi)-\hat\phi^c)-\phi\over
 (1-ac)(1+a\phi)}
 +O_p(n^{-3/2})\,,
\end{equation}
for $(\phi,\hat\phi)$ in \eqref{eq:range}. %
In order to establish \eqref{eq:second-asymp5}, we derive an asymptotic expansion of $\hat\phi^*:=\hat\phi^*(\hat\phi^c,\phi)$ locally at $\hat\phi^c=b(\phi)$ by an application of the generalized inverse mapping argument of Lemma \ref{lemma:1}. 
Let
  $f_n(x)=w^*\big(\phi;\,b(\phi)+[1+ab(\phi)]x\big)$
so that $w^*\big(\phi;\hat\phi^c\big)=f_n(x_0)$  for $x_0=[\hat\phi^c-b(\phi)]/[1+ab(\phi)]$. Also, let $g_n(x)$ be implicitly defined by $f_n(x)=f_n(g_n(x))$ so that 
 $g_n(x_0)=[\hat\phi^*-b(\phi)]/[1+ab(\phi)]$.
One finds
  $f_n^{(2)}(0)=2[(1-ac)^{-2}+a^2]$
  and 
  $f_n^{(k)}(0)=(-1)^{k} 2(k-1)!a^k$
  for 
  $k\geq 3$,
so that the hypotheses of Lemma \ref{lemma:1} are satisfied. Hence, $g_n(x_0)=-x_0+O(n^{-3/2})$ for $x_0=O(1)$,
that is
 $${\hat\phi^*-b(\phi)\over 1+ab(\phi)}
 =-{\hat\phi^c-b(\phi)\over
 1+ab(\phi)}+O_p(n^{-3/2})\,,$$
for 
  $[\hat\phi^c-b(\phi)]/[1+ab(\phi)]=O_p(1)$. 
Since 
  $1+ab(\phi)=[1-az_0^c(1-ac)](1+a\phi)
  =(1-az_0-c)(1+a\phi)$ 
and both $1-az_0-c$ and $1-ac$ are $O(1)$, we get  \eqref{eq:second-asymp5} for $[\hat\phi^c-b(\phi)]/(1+a\phi)=O_p(1)$. The latter corresponds to $(\phi,\hat\phi)$ in the normal deviation range \eqref{eq:range} upon substitution for $\hat\phi^c$ and $b(\phi)$.
The proof is then complete.
\end{proof}
%
%
As an illustration of \eqref{eq:second-asymp2}, assume the coefficients in \eqref{eq:norm-transf} are in agreement so that $\hat\phi$ is the maximum likelihood estimator, that is $z_0=a$, see equation \eqref{eq:c}.
Therefore, let
  $\hat\phi=\phi+(1+a\phi)(Z-a)$,
which yields 
 $b(\phi )=\phi-a(1+a\phi)$.
The distribution function of $\hat\phi$ is $G(y;\phi)=\Phi[(y-\phi)/(1+a\phi)+a]$, 
so that
  $C(\phi)
  =1-\Phi[(\hat\phi-\phi)/(1+a\phi)+a]$,
with median--unbiased estimator $\hat\phi(\half)=(\hat\phi+a)/(1-a^2)$. The log-likelihood ratio and its bias corrected version $w^*(\phi;\hat\phi)$ are easily derived. The exact distribution $F^*$ of $w^*$ is recovered from the standard normal distribution,
  $w^*(\phi;\hat\phi)
  \sim -a^2+[Z/(1-a^2)+a]^2
  -2\log[1+aZ/(1-a^2)]$,
and $cc^*(\phi)$ can be calculated by Monte-Carlo. In the left panel of Figure \ref{Normal_transformation} we plot $C(\phi)$ and $H^*(\phi)$ for $\hat\phi=10$ and $a=0.3$. Even for a non-negligible acceleration $a$ (later we argue that $a=O(n^{-1/2})$, so it roughly corresponds to $n=10$), the median corrected confidence curve $cc^*(\phi)$ nearly exactly recovers, through $H^*(\phi)$, the confidence distribution $C(\phi)$. The right panel shows that the difference between the two confidence distributions is very small, approximately of order $n^{-3}$, suggesting that the order of magnitude in \eqref{eq:second-asymp2} might be conservative.
\begin{figure}[!ht]
\begin{center}
 \includegraphics[width=\textwidth]{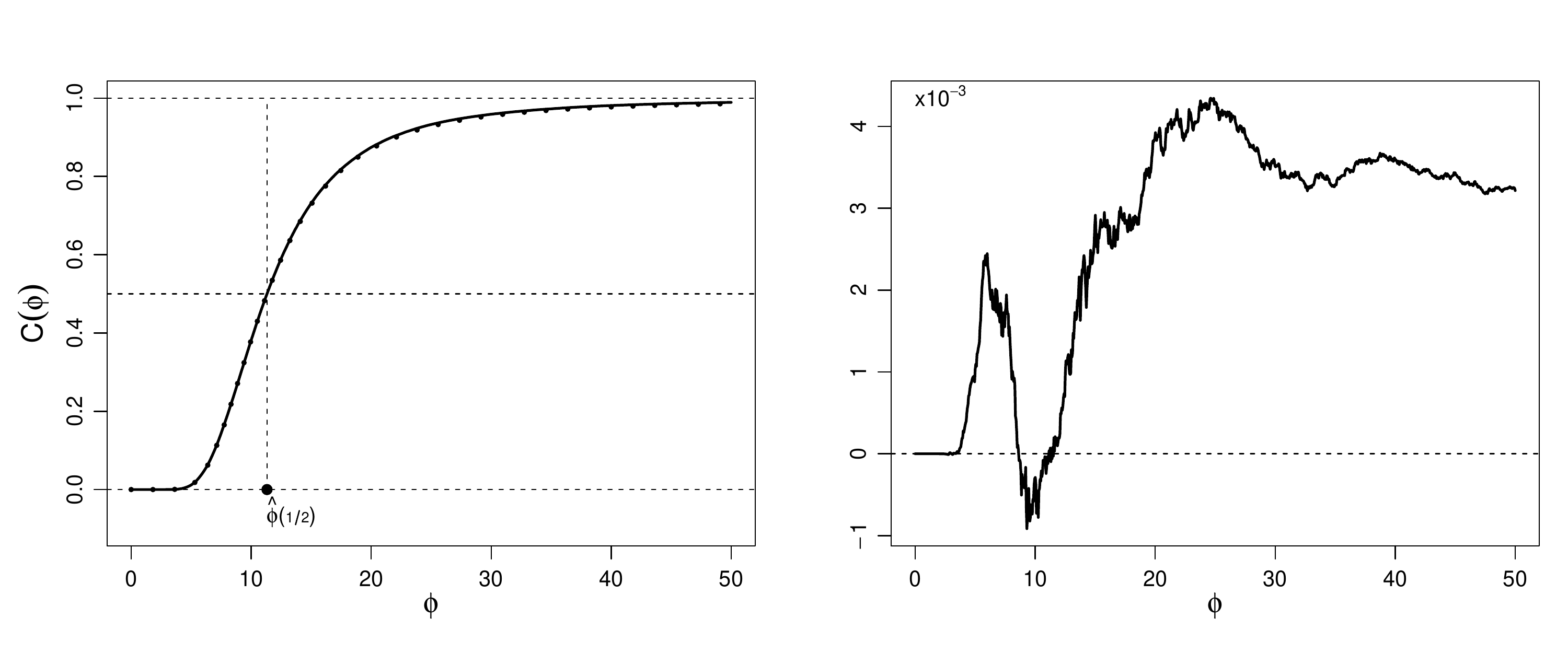}
\caption{\it Normal transformation model \eqref{eq:norm-transf} with $a=z_{0}=.3$, $\hat\phi=10$. Left panel: confidence distributions $H^*(\phi)$ (solid line), and nearly on top $C(\phi)$ (dotted line). Right panel: difference $H^*(\phi)-C(\phi)$. $H^*$ is based on 100000 Monte Carlo simulations. }
\label{Normal_transformation}
\end{center}
\end{figure}


\section{Exponential family}\label{section:4}
\textit{4.1 Tail symmetry.}$\quad$ 
In this section we establish third--order tail symmetry for the mean value parameter of regular one-parameter exponential families. Following \citet[Section 5]{DiC92}, let 
  $X\sim f(x;\bar\eta)=\exp[
  \bar\eta t(x)-\bar\psi(\bar\eta)-d(x)]$,
where $\bar\eta$ is an unknown real parameter.
Given a random sample of size $n$, the log-likelihood for $\bar\eta$ based on $y=n^{-1}\sum_{i=1}^n t(x_i)$ has form $\ell(\bar\eta;y)=n[\bar\eta y-\bar\psi(\bar\eta)]$.
Upon defining $\eta=n\bar\eta$ and $\psi(\eta)=n\bar\psi(\bar\eta)$, the log-likelihood for $\eta$ is
  $\ell(\eta;y)=\eta y-\psi(\eta)$.
Since the cumulant generating function for $y$ is
  $\log E(\edr^{\xi y})=\psi(\eta+\xi)-\psi(\eta),$
the $r$-th order cumulant of $y$ is $\psi^{(r)}(\eta)$, the $r$-th order derivative of $\psi(\eta)$. We set $\theta=\psi'(\eta)$, so that $\theta=E(y)$ and $\mle=y$. Consequently, $\sigma_\theta=\psi^{(2)}(\eta)^{1/2}$ is the standard error of $\mle$, where we use the subscript $\theta$ in $\sigma_\theta$ to highlight the dependence on $\theta$. Note that $\sigma_\theta=O(n^{-1/2})$ since $\psi^{(k)}=O(n^{1-k})$. 
The following result can be stated.
%
%
\begin{theorem}\label{theorem:2}
Let $\mle$ and $w(\theta)$ be the maximum likelihood estimator and the log-likelihood ratio for the mean value parameter in a continuous one-dimensional exponential model based on a random sample of size $n$. Also, let $\sigma_\theta$ be the standard error of $\mle$ and $C(\theta)$ and $H^*(\theta)$ be defined in \eqref{eq:CD} and \eqref{eq:CD_w*}, respectively. Then, as $n\to\infty$,
\begin{equation}\label{eq:1}
  H^*(\theta)=C(\theta)
  +O_{p}\big( n^{-3/2}\big),\quad 
  \mbox{for }
  (\theta-\mle)/\sigma_\theta=O_p(1)\,.
\end{equation}
\end{theorem}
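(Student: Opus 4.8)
The plan is to transport the architecture of the proof of Theorem \ref{theorem:1} to the exponential family, replacing the exact normal symmetry of $G$ — which there rendered the reflection step trivial — by a two-term Edgeworth/Cornish--Fisher expansion that accounts for the non-normality of the maximum likelihood estimator. As in the normal transformation case, $w^*(\theta;\cdot)=w(b(\theta);\cdot)$ is, in the mean-value parametrization, a convex function of the data through $\mle$, vanishing at its minimum $\mle=b(\theta)$; convexity is inherited from that of the cumulant transform $\psi$ and holds throughout the normal deviation range $(\theta-\mle)/\sigma_\theta=O_p(1)$. Treating the case $\mle>b(\theta)$ (the other being symmetric), I would introduce the companion value $\mle^*<b(\theta)$ defined implicitly by $w^*(\theta;\mle^*)=w^*(\theta;\mle)$. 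Since $\{w^*(\theta;X)\le w^*(\theta;\mle)\}=\{\mle^*\le\mle(X)\le\mle\}$, the level-set representation \eqref{eq:CD_w*} gives $F^*(w^*(\theta);\theta)=G(\mle;\theta)-G(\mle^*;\theta)$ and hence
\begin{equation*}
 H^*(\theta)-C(\theta)=\half\big\{G(\mle^*;\theta)-\big(1-G(\mle;\theta)\big)\big\}.
\end{equation*}
The theorem therefore reduces, exactly as in \eqref{eq:pier}, to $G(\mle^*;\theta)=1-G(\mle;\theta)+O_p(n^{-3/2})$.

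I would then locate $\mle^*$ through the generalized inverse mapping of Lemma \ref{lemma:1}. In the coordinate $x=(\mle-b(\theta))/\sigma_{b(\theta)}$ standardized at the median, set $f_n(x)=w^*\big(\theta;\,b(\theta)+\sigma_{b(\theta)}x\big)$; a direct computation of the derivatives at the minimum gives $f_n^{(2)}(0)=2$ and, crucially, $f_n^{(3)}(0)=-2\gamma$ with $\gamma=\psi^{(3)}/\sigma_{b(\theta)}^3$ the standardized skewness. This is the essential departure from Theorem \ref{theorem:1}: there the cubic coefficient was $O(n^{-3/2})$ and $g_n(x)=-x+O(n^{-3/2})$ was a pure reflection, whereas here $\gamma=O(n^{-1/2})$, so Lemma \ref{lemma:1} produces $g_n(x)=-x+\Small{\frac{1}{3}}\gamma\,x^2+O_p(n^{-1})$, and the $O(n^{-1})$ term — carrying the kurtosis $\psi^{(4)}$ and $\gamma^2$ through $f_n^{(4)}(0)$ — must itself be retained. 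Thus $\mle^*$ is the naive reflection $2b(\theta)-\mle$ corrected by skewness and kurtosis terms of orders $n^{-1/2}$ and $n^{-1}$.

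Running in parallel, I would expand the two tail probabilities by a two-term Edgeworth expansion of $G(\,\cdot\,;\theta)$, equivalently a Cornish--Fisher expansion of the $\mle$-quantiles, about the median $b(\theta)$. Because $G(\,\cdot\,;\theta)$ is not symmetric, the elementary identity $1-G(\mle;\theta)=G(2b(\theta)-\mle;\theta)$ of the normal case is destroyed, and the defect is an asymmetry correction governed, again, by $\psi^{(3)}$ at order $n^{-1/2}$ and by $\psi^{(4)},\gamma^2$ at order $n^{-1}$. The crux of the proof is then to verify that the correction carried by $\mle^*$ arising from the \emph{geometry} of $w^*$ cancels the asymmetry correction of the probability reflection arising from the \emph{sampling distribution} $G$, leaving a residual of order $n^{-3/2}$. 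This cancellation is not accidental: in an exponential family both sets of corrections are generated by the same cumulants $\psi^{(r)}=\psi^{(r)}(\eta)$, so the skewness and kurtosis that shape the log-likelihood ratio near its minimum are exactly those that skew and flatten the distribution of $\mle$. I expect the principal obstacle to be this bookkeeping at order $n^{-1}$: one must track the $\psi^{(3)}$ and $\psi^{(4)}$ contributions simultaneously through Lemma \ref{lemma:1} and through the Cornish--Fisher expansion and check that they agree, while also handling the $O(n^{-1/2})$ standardized shift between the median $b(\theta)$ and the mean $\theta$ and the attendant change of scale from $\sigma_{b(\theta)}$ to $\sigma_\theta$ when passing to the normal deviation range of \eqref{eq:1}. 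Once this matching is established for $\mle>b(\theta)$, the symmetric argument for $\mle<b(\theta)$ completes the proof.
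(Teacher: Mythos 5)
Your proposal follows essentially the same route as the paper's own proof: the same reduction to $G(\mle^*;\theta)=1-G(\mle;\theta)+O_p(n^{-3/2})$ for $\mle>b(\theta)$, the same application of Lemma \ref{lemma:1} to $f_n(x)=w^*(\theta;b(\theta)+\sigma x)$ (your derivative computations and the resulting $g_n(x)=-x+\Small{\frac{1}{3}}\gamma x^2+O(n^{-1})$ agree with the paper's \eqref{eq:Lemma1}), and the same Cornish--Fisher/Edgeworth treatment of the reflected tail quantile, with the proof concluded by matching the two expansions---exactly the paper's comparison of \eqref{eq:th.star} and \eqref{eq:th.2star}. The only part you leave unexecuted is the final bookkeeping, which the paper carries out explicitly; there one finds that the kurtosis contributions you propose to track in fact drop out on both sides (via $a_{n,4}=O(b_{n,5})$ in Lemma \ref{lemma:1}, and via the oddness of the $n^{-1}$ Cornish--Fisher terms under reflection), leaving only the $\rho_3$ and $\rho_3^2$ terms to match.
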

The proof is deferred to the Appendix and we only provide here in this paragraph a sketch. Reasoning as in the proof of Theorem \ref{theorem:1}, take $\mle>b(\theta)$ so that
  $H^*(\theta)=
  [1-G(\mle;\theta)+G(\mle^*;\theta)]/2$,
where $\mle^*$ is implicitly defined by $w^*(\theta;\mle^*)=w^*(\theta;\mle)$. When  for $\mle>b(\theta)$, \eqref{eq:1} corresponds to
  $G(\mle^*;\theta)=
  1-G(\mle;\theta)+O_p(n^{-3/2})$,
cfr. \eqref{eq:pier}. An asymptotic expansion of $\mle^*$ as function of $\mle$ and $\theta$ is obtained via the generalized inverse mapping result of Lemma \ref{lemma:1}. In order to work with left tail probabilities, we further define $\mle^{**}$ as function of $\mle$ and $\theta$ to satisfy
  $1-G(\mle;\theta)=
  G(\mle^{**};\theta)$. 
While for the normal transformation family $\mle^{**}$ can be derived in explicit form, in the present setting an additional asymptotic expansion is required. This is achieved by an Edgeworth expansion and the allied Cornish--Fisher inversion for the standardized distribution of $(\mle-\theta)/\sigma_\theta$. The proof is completed by showing that $\mle^*$ and $\mle^{**}$ coincide up to the required asymptotic order for $(\mle-\theta)/\sigma_\theta=O_p(1)$.

Note that tail symmetry of $cc^*(\theta)$ in the $\mbox{N}(0,\theta)$ example of Section \ref{section:2} holds by Theorem \ref{theorem:2} for $\sigma_\theta=\sqrt 2\theta/n^{1/2}$, and, hence, a theoretical justification of what we observed in Figure \ref{fig3} is obtained. It is worth noting that the chi-squared distribution of the maximum likelihood estimator is the running example in \citet{Efr87} where it is shown that the transformation $g$ leading to \eqref{eq:norm-transf} nearly exists, see remark E in Section 11 therein (actually, \citet{Efr87} considers the sampling distribution of the unbiased estimate of the variance when the mean is unknown).
Hence, this example also provides an illustration of tail symmetry in the normal transformation family as stated in Theorem \ref{theorem:1}.
\medskip


\textit{4.2 Comparison with the modified directed likelihood.}$\quad$ 
We adopt here the notation in \citet[Section 5]{Bar94} for the partial derivatives of $\ell$ with respect to $\theta$ and $\mle$, namely
  $$\ell_{k;s}(\theta;\mle)
  =\frac{\partial}{\phantom{\mle}\partial\theta^k}
  \frac{\partial}{\partial\mle^s}
  \ell(\theta;\mle)$$
for nonnegative integers $k$ and $s$. We also adopt the convention of a slash through $\ell$ indicating the substitution of $\theta$ for $\mle$ and a hat sign indicating the substitution of $\mle$ for $\theta$ after any differentiation. The observed information is defined either as $\cancel{j}=-\cancel{\ell}_{2}$ or as $\hat{j}=-\hat{\ell}_{2}$ according to whether it is considered as a quantity depending on the parameter or the data only. Recall the definition $w(\theta;x)=2(\ell(\mle;x)-\ell(\theta;x))$ the log-likelihood ratio and 
\begin{equation}\label{eq:directlike1}  
  r(\theta)=r(\theta;x)=\mbox{sign}(\mle-\theta)w(\theta;x)^{1/2}
\end{equation}  
for the directed likelihood. Since $r(\theta)$ is increasing in $\mle$, 
  $1-P(r(\theta;X)\leq r(\theta;x);\theta)=C(\theta)$,
where $C(\theta)$ has been defined as 
  $C(\theta)=1-G(\hat \theta;\theta)$
assuming that the mle $\hat\theta$ has distribution $G(y,\theta)=\Prob(\hat\theta\leq y;\theta)$ non-increasing in $\theta$.
The modified directed likelihood is defined as
\begin{equation}\label{eq:directlike2}  
  r^*(\theta)=r(\theta)
  -\frac{1}{r(\theta)}
  \log\frac{r(\theta)}{u(\theta)},\quad
  u(\theta)=\widehat{j}\,\{\hat\ell_{;1}
  -\ell_{;1}(\theta)\}^{1/2},
\end{equation}  
see \citet[Section 6.6]{Bar94}. It is a higher order pivot, that is it has normal distribution with error $O(n^{-3/2})$ in the normal deviation range $\sqrt n(\mle-\theta)=O_p(1)$, so that
  $1-\Phi(r^*(\theta))=C(\theta)+O(n^{-3/2})$.
Consider now the median bias corrected directed likelihood,
\begin{equation}\label{eq:directlike3}  
  r(b(\theta))
  =\mbox{sign}(\mle-b(\theta))\,
  w(b(\theta))^{1/2}
\end{equation}
where we recall that $b(\theta)$ has been defined as the median of the $\mle$, i.e. the function of $\theta$ that satisfies $G(b(\theta);\theta)=0.5$. Notice that, since $r(b(\theta))$ is increasing in $\mle$, we also have that 
  $1-P(r(b(\theta);X)\leq r(b(\theta);x);\theta)=C(\theta)$.
In the next theorem we establish that, in regular one parameter exponential families, $r(b(\theta))$ and the modified directed likelihood $r^*(\theta)$ are second order equivalent in the normal deviation range.
%
%
\begin{theorem}\label{theorem:3}
Let $\mle$ and $w(\theta)$ be the maximum likelihood estimator and the log-likelihood ratio for the mean value parameter in a continuous one-dimensional exponential model based on a random sample of size $n$. Also, let $\sigma_\theta$ be the standard error of $\mle$ and $r^*(\theta)$ and $r(b(\theta))$ be defined according to \eqref{eq:directlike1}--\eqref{eq:directlike3}. Then, as $n\to\infty$,
\begin{equation}\label{eq:asy_r}
  r(b(\theta))=r^*(\theta)+O_p(n^{-1}),\quad 
  \mbox{for }
  (\theta-\mle)/\sigma_\theta=O_p(1)\,.
\end{equation}
\end{theorem}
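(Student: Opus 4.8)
\emph{Strategy.} The plan is to compare both $r(b(\theta))$ and $r^*(\theta)$ with the exact normalizing transform
$$
  r^\dagger(\theta)=r^\dagger(\theta;x)=\Phi^{-1}(G(\mle;\theta)),
$$
the quantity for which $1-\Phi(r^\dagger(\theta))=C(\theta)$ holds identically. For the modified directed likelihood the comparison is immediate from the higher-order pivot property recalled just before the theorem, $1-\Phi(r^*(\theta))=C(\theta)+O(n^{-3/2})$: reading it as $\Phi(r^*(\theta))=G(\mle;\theta)+O_p(n^{-3/2})$ and inverting $\Phi$ (whose derivative is bounded away from $0$ on the normal deviation range) gives
$$
  r^*(\theta)=r^\dagger(\theta)+O_p(n^{-3/2}),\qquad (\theta-\mle)/\sigma_\theta=O_p(1).
$$
It therefore suffices to prove the single relation $r(b(\theta))=r^\dagger(\theta)+O_p(n^{-1})$ in the same range; \eqref{eq:asy_r} then follows by subtraction.

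\emph{Two signed-root expansions.} I would work in the standardized variable $z=(\mle-\theta)/\sigma_\theta$, which in the mean-value parametrization has mean exactly $0$ and variance exactly $1$, its leading departure from normality being carried by the standardized skewness $\rho_3=\psi^{(3)}(\eta)/\psi^{(2)}(\eta)^{3/2}=O(n^{-1/2})$. The one-term Edgeworth expansion $G(\mle;\theta)=\Phi(z)-\varphi(z)\tfrac{\rho_3}{6}(z^2-1)+O_p(n^{-1})$, with $\varphi$ the standard normal density, inverts through $\Phi^{-1}$ to
$$
  r^\dagger(\theta)=z-\tfrac{\rho_3}{6}(z^2-1)+O_p(n^{-1}).
$$
On the other hand, a short expansion of $w(\theta)$ in $t=\hat\eta-\eta$, re-expressed through $z$, reproduces the familiar signed-root expansion
$$
  r(\theta)=\mbox{sign}(\mle-\theta)\,w(\theta)^{1/2}=z-\tfrac{\rho_3}{6}z^2+O_p(n^{-1}).
$$
The two displays differ only by the constant $\tfrac{\rho_3}{6}$: the skewness correction of the directed likelihood at the true parameter is a pure location effect, and it is exactly this constant that the median correction must restore.

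\emph{Effect of the median correction.} Since $\mle$ is mean-unbiased, the median of $z$ solves $\Phi(z_{\rm med})-\varphi(z_{\rm med})\tfrac{\rho_3}{6}(z_{\rm med}^2-1)=\tfrac12$, whence $z_{\rm med}=-\rho_3/6+O(n^{-1})$ and $b(\theta)=\theta-\sigma_\theta\rho_3/6+O(n^{-3/2})$. Evaluating the signed-root expansion at $b(\theta)$ rather than at $\theta$, with $\tilde z=(\mle-b(\theta))/\sigma_{b(\theta)}$ and using $\rho_3(b(\theta))=\rho_3+O(n^{-3/2})$ and $\sigma_{b(\theta)}=\sigma_\theta\{1+O(n^{-1})\}$, one gets $\tilde z=z+\rho_3/6+O_p(n^{-1})$ and hence
$$
  r(b(\theta))=\tilde z-\tfrac{\rho_3}{6}\tilde z^{\,2}+O_p(n^{-1})
  =z-\tfrac{\rho_3}{6}(z^2-1)+O_p(n^{-1}),
$$
which coincides with $r^\dagger(\theta)$ to the stated order. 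Combined with the $r^*$ display this yields $r(b(\theta))-r^*(\theta)=O_p(n^{-1})$, as claimed. (The computation is written for $\mle>b(\theta)$; the case $\mle<b(\theta)$ is identical with signs reversed, the two agreeing continuously at $\mle=b(\theta)$.)

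\emph{Main obstacle.} The delicate point is the exact cancellation of the two $O(n^{-1/2})$ contributions: one must check that the median shift $b(\theta)-\theta$ carries precisely the coefficient $-\sigma_\theta\rho_3/6$ that converts the $-\tfrac{\rho_3}{6}z^2$ of $r(\theta)$ into the $-\tfrac{\rho_3}{6}(z^2-1)$ of $r^\dagger$, and that every neglected contribution is genuinely $O_p(n^{-1})$ \emph{uniformly} for $(\theta-\mle)/\sigma_\theta=O_p(1)$: the remainders in the Edgeworth and Cornish--Fisher expansions, the product of the relative $O(n^{-1})$ error in $\sigma_{b(\theta)}/\sigma_\theta$ against $z$, and the quadratic discrepancy $\tfrac{\rho_3}{6}(\tilde z^{\,2}-z^2)$, each of which I would bound using the orders $\psi^{(k)}=O(n^{1-k})$ recorded at the start of the section. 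Apart from this bookkeeping the argument is routine.
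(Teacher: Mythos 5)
Your proposal is correct, but it takes a genuinely different route from the paper's proof. The paper compares the two quantities head-on: it invokes the Barndorff-Nielsen (1990) expansion relating $r(\theta)$ to $u(\theta)$ to write $r^*(\theta)=r(\theta)-\tfrac{1}{6}r(\theta)^{-1}(\cancel{\ell}_3+3\cancel{\ell}_{2;1})(\mle-\theta)\cancel{j}^{-1}+O(n^{-1})$, Taylor-expands $r(b(\theta))$ around $\theta$ as $r(\theta)-r(\theta)^{-1}\ell_1(\theta)(b(\theta)-\theta)+R_n$, and then matches the two correction terms via the exponential-family identities $\ell_1(\theta)=\cancel{j}(\mle-\theta)$, $\cancel{\ell}_3=-2\eta''$, $\cancel{\ell}_{2;1}=\eta''$ together with the median expansion $b(\theta)-\theta=-\rho_3\sigma_\theta/(6\sqrt n)+O(n^{-2})$. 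You instead route both quantities through the exact normal-scores pivot $r^\dagger(\theta)=\Phi^{-1}(G(\mle;\theta))$: you prove $r(b(\theta))=r^\dagger(\theta)+O_p(n^{-1})$ from scratch using only Edgeworth/Cornish--Fisher expansions of the standardized mean (no sample-space derivatives at all), and you dispose of the other half by importing the known $O(n^{-3/2})$ pivotal property $1-\Phi(r^*(\theta))=C(\theta)+O(n^{-3/2})$, which the paper indeed states (with citation) just before the theorem. Your calculations check out: the signed-root expansion $r(\theta)=z-\tfrac{\rho_3}{6}z^2+O_p(n^{-1})$, the inverted Edgeworth expansion $r^\dagger(\theta)=z-\tfrac{\rho_3}{6}(z^2-1)+O_p(n^{-1})$, and the shift $\tilde z=z+\rho_3/6+O_p(n^{-1})$ are all correct (modulo your $O(n^{-1/2})$ normalization of $\rho_3$ versus the paper's $O(1)$ one), and both proofs ultimately hinge on the same cancellation, namely that the median shift carries coefficient $-\sigma_\theta\rho_3/6$. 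The trade-off: the paper's argument is self-contained modulo the 1990 $r$--$u$ expansion and exhibits the theorem as an algebraic identity between likelihood quantities, with the second-order pivotality of $r(b(\theta))$ then falling out as the corollary stated after the theorem; your argument reverses that logical order --- it proves the pivotality of $r(b(\theta))$ directly and obtains the theorem as a consequence of the cited $r^*$ result --- which is more elementary in its technique but makes the conclusion depend on Barndorff-Nielsen's theorem as an input rather than reproving anything about $r^*$.
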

The proof is deferred to the Appendix. Note that, because of the higher order pivotal property of $r^*(\theta)$, \eqref{eq:asy_r} implies that 
  $1-\Phi(r(b(\theta))=C(\theta)+O_p(n^{-1})$
in the normal deviation range, that is $r(b(\theta))$ has sampling distribution closer to normality than $r(\theta)$. 


\section{Discussion}\label{section:5}

There has been a renewed interest in confidence distributions in recent years, see \citet{Xie11} and \citet{Sch16}. In this paper we have undertaken an asymptotic investigation of the merits of median bias correction in deriving higher order accurate confidence limits. We found that, in regular one-dimensional models, the confidence distribution obtained from the bias corrected log-likelihood ratio is third--order equivalent to the unique exact confidence distribution based on the maximum likelihood estimator. Moreover, the bias corrected directed likelihood provides a second order approximation to the modified directed likelihood of \citet{Bar86}, thus consisting in a high order pivot. It shows, from a different perspective, the close connection between the log-likelihood ratio and the distribution of the maximum likelihood estimator so extensively studied in the literature, a key example being the $p^*$ approximation of \citet{Bar83}. We are not aware of similar results in the literature on higher order asymptotics. 

We have not discussed the effect of the bias correction on the sampling distribution of the log-likelihood ratio $w^*(\theta)=w(b(\theta))$. With the median function $b(\theta)$ at hand, the chi-squared transformation of $w^*$ will typically provide more \equitailed intervals than the usual chi-squared calibration of $w$. We found however that median bias correction is second order equivalent to what is found via the modified directed likelihood. The convergence to chi-squared distribution of $w^*$ is thus at least to the second order in regular one-parameter exponential families. 
%

An important direction for future research is the extension of the results of Theorem \ref{theorem:2} to models with nuisance parameters. In full $p$-dimensional exponential models when the interest parameter $\theta$ is a linear function of the canonical parameters, or a ratio of two canonical parameters,
a reparametrization from the canonical parameter vector $\eta$ to $(\theta,\lambda)$, where $\lambda$ is a $(p-1)$-dimensional nuisance parameter, can be made and the canonical statistic $y$ can be re-expressed as $(y_1,y_2)$ having density
  $f(y_1,y_2;\theta,\lambda)=
  \exp[\theta y_1+\lambda y_2-\psi(\theta,\lambda)
  -d(y_1,y_2)]$.
Exact inference on $\theta$ can be based on the conditional distribution of $y_1$ given $y_2$, which depends on $\eta$ only through $\theta$. 
See \citet{Pie92}, and \citet{Sch16} who find the conditional confidence distribution to be uniformly most powerful. The definition of $C(\theta)$ and $b(\theta)$ are to be interpreted conditionally on $y_2$ as well. We expect the median bias corrected confidence curve based on the profile likelihood to be tail-symmetric to the third order, and to the second order to be chi-square distributed. 
The investigation of the relation of the bias corrected profile likelihood with other versions of adjusted profile likelihoods that have been proposed in the literature would also be of interest.
Outside the exponential family, the evaluation of sample space derivatives of the likelihood requires the identification of an ancillary statistic. Moreover, the distribution of the maximum likelihood estimator has to be evaluated conditionally upon this statistic. 
The asymptotic approximations used in Theorem \ref{theorem:2} can be adapted to this setting, a natural extension being for transformation families. 
Next is a preliminary illustration of the use of median bias correction to confidence curves in a multidimensional statistical model. The model in the example below is not in the exponential family, nor an ancillary statistic is available, and we there use brute force to handle the nuisance parameter.
\medskip


\textit{Example.}$\quad$ 
We consider the \comillas{Bolt from heaven} data example from Section 7.4 in \citet{Sch16}. Data consists of $n=195$ winning times in the fastest $100$-m races from 2000 to 2007, that is races that clocked at $10.00$ seconds or better. \citet{Sch16} translate these races results $r_i$ as $x_i=10.005-r_i$ in order to apply extreme value statistics. Specifically, the data is modeled using the Generalized Pareto Distribution (GPD) which has density
  $$f(x;a,\sigma)=\frac{1}{\sigma}\bigg(
  1-a\frac{x}{\sigma}\bigg)^{\frac{1}{a}-1},\quad
  0\leq x\leq \sigma/a,$$
for $a,\sigma>0$. Cfr. Sections 3.4 and 6.5 in \citet{Emb97}. Interest is in estimating 
  $$p=p(a,\sigma)=1-\exp\{-\lambda(1-aw/\sigma)\},$$
for $\lambda=195/8$ and $w=10.005-9.72=0.285$. It takes on the interpretation of the probability, as seen at the start of 2008, that in the $N\sim Pois(\lambda)$ fastest races of 2008 one should experience a race of $9.72$ or better, where $9.72$ is the world record time scored by Usain Bolt on 31 May 2008. See \citet{Sch16} for details. The authors compute a confidence curve for the parameter $p$ by profiling the log-likelihood,
  $\ell_{{\rm P}}(p_0)
  =\max\{\ell(a,\sigma):\ p(a,\sigma)=p_0\}$
and by inverting the profile log-likelihood ratio
  $w(p_0)
  =2(\ell_{{\rm P}}(\hat p)-\ell_{{\rm P}}(p_0))$
with respect to the chi-squared distribution after Bartlett correction,
  $$cc(p)=Q(w(p)/(1+\epsilon)),$$
where $(1+\epsilon)=\E(w(p))\approx 1.07$ (found through simulations) and $Q(\cdot)$ is the chi-squared distribution function with $1$ degree of freedom. By construction, $cc(p)$ points at $\hat{p}=p(\hat{a},\hat\sigma)=0.0345$ according to maximum likelihood estimates $\hat{a}=0.1821$ $(0.0702)$ and $\hat\sigma=0.0745$ $(0.0074)$  (with approximate standard errors in parentheses) and has 90\% confidence interval $[0.0002, 0.1965]$. We proceed next with median bias correction of $w(p)$ so to produce the bias corrected confidence curve $cc^*(p)$. To this aim, the median function of $\hat p$ needs to be estimated. The problem here is that, since $\hat p$ is not a sufficient statistics for $p$, its sampling distribution is not uniquely determined by the value of $p$. If an ancillary statistic for $p$ was available, say $s=s(x)$, then the median function to be used would be
  $$b(p):\ \Prob(\hat p\leq b(p)|s;p)=0.5$$
where conditioning is intended with respect to the realized value of $s$ in the data. Note that $b(p)$ does not depend on the parameter $a$ since the conditional distribution of $\hat p$ given $s$ does not depend on $a$ by definition. The median bias corrected log-likelihood ratio would then be 
  $w^*(p)=w(b(p))$ 
with sampling distribution
  $F^*(y|s;p)=\Prob(w(b(p))\leq y|s;p)$
so that the bias corrected confidence curve would be
  $cc^*(p)=F^*(w^*(p)|s;p)$.

For the sake of illustration we proceed here without relying on such ancillary statistic. It is convenient to introduce the reparameterization $(a,\sigma)\mapsto (a,p)$ so to define the median function of $\hat p$ to depend on both $p$ and $a$, say
  $$b(p|a):\ \Prob(\hat p\leq b(p|a);a,p)=0.5.$$ 
In the left panel of Figure \ref{fig:Bolt} we plot $b(p|a)$ for $a=(0.05,\hat a,0.3)$ (this range has about $95\%$ confidence for $a$). Each curve is obtained by spline interpolation (constrained to be $0$ at $p=0$) of the median of $\hat p$ for a fine grid of $p$ values. For this we used Monte Carlo simulations: for each combination of $a$ and $p$, 15000 samples of size $n=195$ from the GPD were drawn and $b(p|a)$ is estimated via the 15000 realizations of the sample median. Hence we replace $b(p)$ with
  $\hat b(p)=b(p|\hat a)$,
i.e. the solid line plotted in the left panel of Figure \ref{fig:Bolt}. The median bias corrected log-likelihood ratio is then defined as
  $w(\hat b(p))$
and we estimate its sampling distribution 
  $F^*(y;\hat a,p)=\Prob(w(\hat b(p))\leq y;\hat a,p)$
for a grid of $p$ values through simulations. Finally, we compute the bias corrected confidence curve
  $$cc^*(p)=F^*(w(\hat b(p));\hat a,p)$$
and we plot it together with $cc(p)$ (based on the chi-squared approximation with Bartlett correction) in the right panel of Figure \ref{fig:Bolt}. Median bias correction moves the confidence curves slightly to the right
to the effect that the upper 5\% confidence quantile is $0.2278$ instead of $0.1965$.
\begin{figure}[!ht]
\begin{center}
  $$\begin{array}{cc}
  \includegraphics[width=0.45\textwidth]{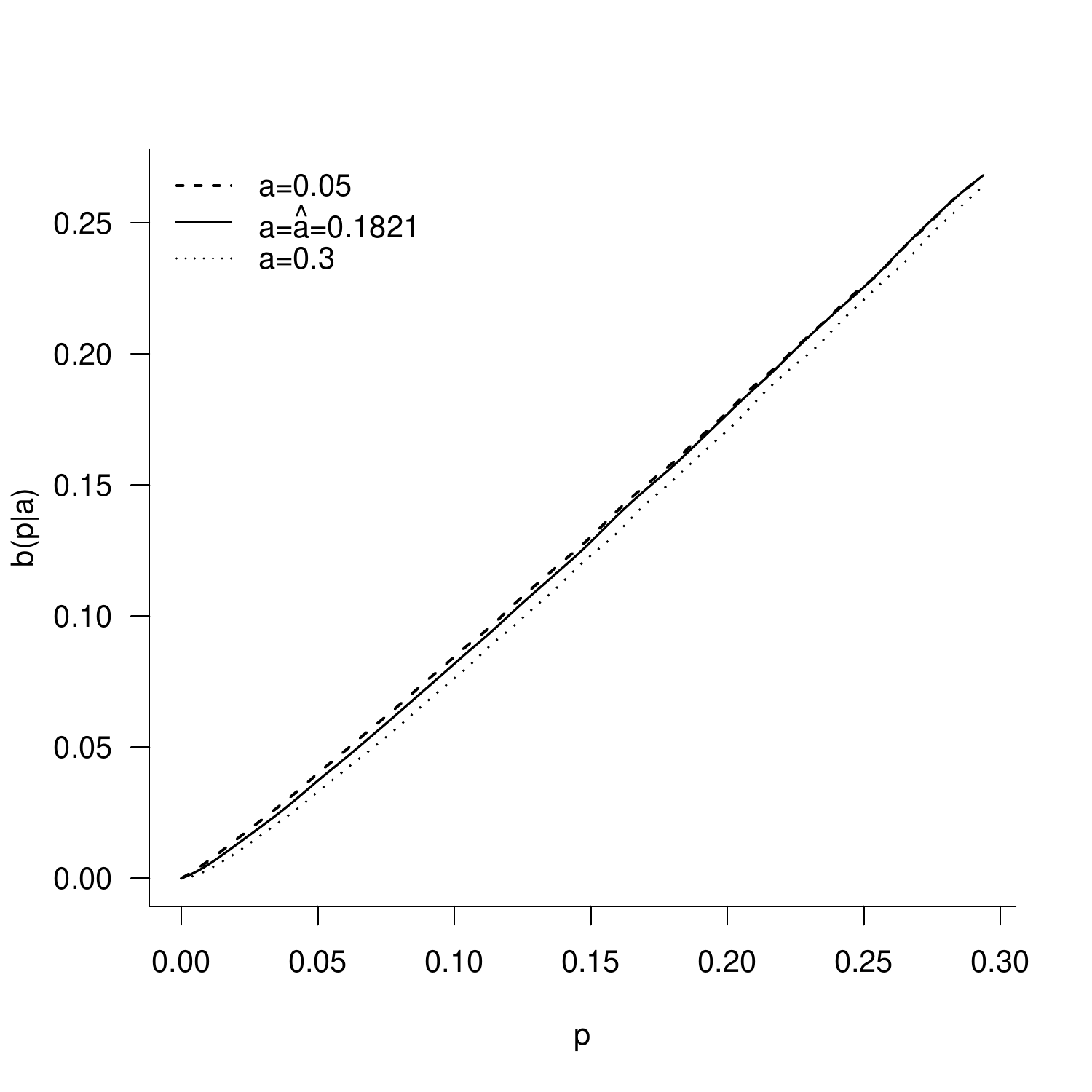}&
  \includegraphics[width=0.45\textwidth]{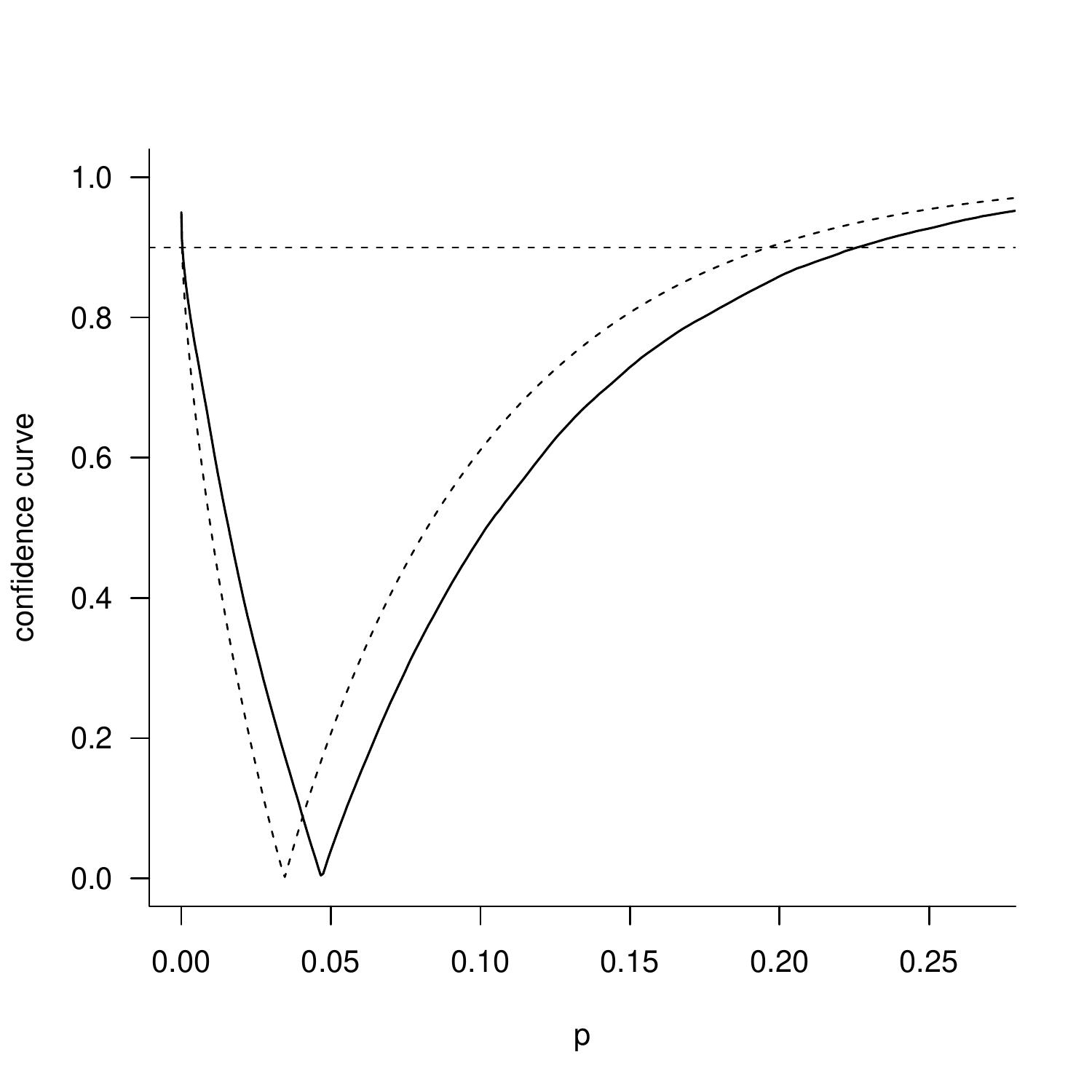}
  \end{array}$$
\caption{\it Bolt from heaven data example with $\hat a=0.1821$, $\hat\sigma=0.0745$ and $\hat p=0.0345$. Left panel: median function $b(p|a)$ for $a=(0.05,\hat a,0.3)$. Right panel: confidence curve $cc(p)$ (dashed line)
together with bias corrected confidence curve $cc^*(p)$ 
(solid line). 
}
\label{fig:Bolt}
\end{center}
\end{figure}

\appendix
\section*{Appendix}
\label{section:appendix}

%
%
\begin{proof of}{Theorem \ref{theorem:2}}
We adopt the notation
  $\eta=\eta(\theta)$
for the inverse of $\psi'(\eta)$ with the convention that, whenever we suppress the dependence of $\eta$ on $\theta$, we mean that it is evaluated at $\theta$. 
Moreover, 
the ``hat'' sign denotes evaluation at $\theta=\mle$ as in $\hat\psi^{(k)}=\psi^{(k)}(\eta(\mle))$ or in $\hat\eta'=\eta'(\mle)$.
Since $\psi^{(k)}(\eta)=O(n^{1-k})$, 
the standardized skewness and kurtosis of $\mle$ are
  $\rho_3=n^{1/2}\psi^{(3)}(\eta)/\sigma^3_\theta$ 
and
  $\rho_4=n\psi^{(4)}(\eta)/\sigma^4_\theta$,
where $\sigma_\theta=\psi^{(2)}(\eta)^{1/2}$ is the standard error of $\mle$. For $U=(\mle-\theta)/\sigma_\theta$ and $K_\theta(x)=\Prob(U\leq x;\theta)$ ,
a Cornish-Fisher expansion of $K_\theta^{-1}(\alpha)$ yields
\begin{equation}\label{eq:corn_fish}
  K_\theta^{-1}(\alpha)=z_\alpha
  +{\rho_3\over 6n^{1/2}}(z_\alpha^2-1)
  +{\rho_4\over 24n}(z_\alpha^3-3z_\alpha)
  +{\rho_3^2\over 36n}(2z_\alpha^3-5z_\alpha)
  +O(n^{-3/2}),
\end{equation}
see \citet[Section 4.4]{Bar89}. 
The following asymptotic expansion of the median of $\mle$ is readily derived:
\begin{equation}\label{eq:median}
  b(\theta)=\theta-\frac{1}{6\sqrt n}\rho_3\sigma_\theta
  +O(n^{-2}).
\end{equation}
The log-likelihood ratio for $\theta$ is 
  $w(\theta;\mle)
  =2[\mle(\hat\eta-\eta)-
  [\hat\psi-\psi(\eta)]]$. 
Let $w^*(\theta;\mle)
  =w(b(\theta);\mle)$ according to \eqref{eq:corr-dev}. 
It is easy to check that the first three sample derivatives of $w^*$ are
  $\partial w^*/\partial\mle
  =2[\hat\eta-\eta(b(\theta))]$, 
  $\partial^2 w^*/\partial\mle^2
  =2	\hat\eta'$ 
and
  $\partial^3 w^*/\partial\mle^3
  =2\hat\eta''$.
Using the formulae for the derivative of the inverse of a function, one obtains
\begin{equation}\label{eq:eta_prime}
  \eta'(x)=1/\psi^{(2)}(\eta(x)),\quad
  \eta''(x)=-\psi^{(3)}(\eta(x))/\psi^{(2)}(\eta(x))^3
\end{equation}
so that $\eta'(\theta)=1/\sigma_\theta^2$ and $\eta''(\theta)=-\rho_3/(n^{1/2}\sigma_\theta^3)$. 
Let $\theta^*$ be implicitly defined as a function of $\mle$ and $\theta$ by $w^*(\theta;\theta^*)=w^*(\theta;\mle)$ and assume that $\mle>b(\theta)$. Then \eqref{eq:1} corresponds to
\begin{equation}\label{eq:4}
  K_\theta\big((\theta^*-\theta)
  /\sigma_\theta \big)
  =1-K_\theta\big(U\big)
  +O(n^{-3/2})
\end{equation}  
for $U=O_p(1)$, cfr. \eqref{eq:pier}. Next, let $\theta^{**}$ be implicitly defined in function of $\mle$ and $\theta$ by
  $1-K_\theta\big(U\big)
  =K_\theta\big((\theta^{**}-\theta)
  /\sigma_\theta \big)$. 
Hence, \eqref{eq:4} is implied by
\begin{equation}\label{eq:5}
  (\theta^*-\theta)/\sigma_\theta
  =(\theta^{**}-\theta)/\sigma_\theta
  +O(n^{-3/2})
\end{equation}  
cfr. \eqref{eq:second-asymp5}. 
We prove \eqref{eq:5} by matching the asymptotic expansions of $\theta^*$ and $\theta^{**}$ via an application of Lemma 1 and Edgeworth expansion of $K_\theta$, respectively.

As for the former, we define, according to Lemma 1, 
  $f_n(x)=w^*(\theta;\,b(\theta)
  +\sigma_\theta x)$
and $g_n(x)$ by $f_n(x)=f_n(g_n(x))$, so that
  $g_n\big( (\mle-b(\theta))/\sigma_\theta \big)
  =(\theta^*-b(\theta))/\sigma_\theta$.
One finds 
  $f_n^{(2)}(0)=2\eta'(b(\theta))\sigma_\theta^2$
and 
  $f_n^{(k)}(0)=2\eta^{(k-1)}(b(\theta))\sigma_\theta^k$
for $k\geq 3$.
It is easy to show that $\eta^{(k)}(x)=O(n)$ for any integer $k$, cfr. \eqref{eq:eta_prime}, so the hypothesis of Lemma 1 are satisfied. 
Hence an application of Lemma \ref{lemma:1} yields
\begin{multline}\label{eq:Lemma1} 
  {\theta^*-b(\theta)\over \sigma_\theta}
  =-{\mle-b(\theta)\over \sigma_\theta}
  -{1\over 3}{\eta'(b(\theta))\sigma_\theta\over \eta''(b(\theta))}
  \bigg( {\mle-b(\theta)\over \sigma_\theta} \bigg)^2\\
  -\bigg({1\over 3}
  {\eta'(b(\theta))\sigma_\theta\over \eta''(b(\theta))}\bigg)^2
  \bigg( {\mle-b(\theta)\over \sigma_\theta} \bigg)^3
  +O(n^{-3/2})
\end{multline}
for $[\mle-b(\theta)]/\sigma_\theta=O_p(1)$. Based on \eqref{eq:median}, we have
  $[\theta^{**}-b(\theta)]/\sigma_\theta
  =(\theta^{**}-\theta)/\sigma_\theta
  +\rho_3/6n^{1/2}+O(n^{-3/2})$ 
and
  $(\mle-b(\theta))/\sigma_\theta
  =U
  +\rho_3/6n^{1/2}+O(n^{-3/2})$. 
Moreover, using a simple Taylor expansion, \eqref{eq:eta_prime} and $\eta^{(k)}(x)=O(n)$ for any $k$, it can be shown that
  $\eta'(b(\theta))/\eta''(b(\theta))
  =-\rho_3/n^{1/2}\sigma_\theta+O(n^{-1})$. 
Hence, we can reduce \eqref{eq:Lemma1} to
\begin{align}
  {\theta^*-\theta\over \sigma_\theta}
  &=-U-{2\rho_3\over 6n^{1/2}}
  +{1\over 3}{\rho_3\over n^{1/2}}
  U^2
  +{1\over 3}{\rho_3\over n^{1/2}}\bigg(2U{\rho_3\over6n^{1/2}}\bigg)
  -\bigg({1\over 3}
  {\rho_3\over n^{1/2}}\bigg)^2
  U^3
  +O(n^{-3/2})\notag\\
  &=-U+{\rho_3\over 3n^{1/2}}(U^2-1)
  -\bigg({1\over 3}
  {\rho_3\over n^{1/2}}\bigg)^2
  (U^3-U)
  +O(n^{-3/2})\label{eq:th.star}
\end{align}
for $U=O_p(1)$. As for the asymptotic expansion of $\theta^{**}$ in \eqref{eq:5}, let $h_n(x)$ satisfy $K_\theta(h_n(x))=1-K_\theta(x)$ so that $(\theta^{**}-\theta)/\sigma_\theta=h_n(U)$. Using Cornish-Fisher expansion \eqref{eq:corn_fish} one finds that, for any $\alpha\in(0,1)$,
  $$K^{-1}(\alpha)=-K^{-1}(1-\alpha)+\rho_3
  (z_{1-\alpha}^2-1)/3n^{1/2}+O(n^{-3/2})$$
so that
  $h_n(x)=-x+\rho_3
  [\Phi^{-1}(1-K_\theta(x))^2-1]/3n^{1/2}+O(n^{-3/2})$  
for $x=O(1)$. Hence,
\begin{equation}\label{eq:th.2star}
  {\theta^{**}-\theta\over \sigma_\theta}
  =-U+{\rho_3\over 3n^{1/2}}
  [\Phi^{-1}(1-K_\theta(U))^2-1]+O(n^{-3/2})
\end{equation}  
for $U=O_p(1)$. Next, use the Edgeworth expansion for $K_\theta(U)$ up to the first term, i.e.
  $1-K_\theta(U)
  =\Phi(-U)+\phi(-U)\rho_3(U^2-1)/6n^{1/2}+O(n^{-1})$
and a Taylor expansion of $\Phi^{-1}(x+\Delta x)$ at $x=\Phi(-U)$ for $\Delta x=1-K_\theta(U)-\Phi(-U)$ to get
  $$\Phi^{-1}(1-K_\theta(U))
  =-U+\rho_3(U^2-1)/6n^{1/2}+O(n^{-1}).$$
Substitution into \eqref{eq:th.2star} leads to
an asymptotic expansion of $(\theta^{**}-\theta)/\sigma_\theta$ which corresponds to \eqref{eq:th.star}. 
Hence, \eqref{eq:5} follows and the proof is complete.
\end{proof of}
%
%
\begin{proof of}{Theorem \ref{theorem:2}}
In order to prove \eqref{eq:asy_r}, we proceed by deriving two asymptotic expansions for $r^*(\theta)$ and $r(b(\theta))$ and by showing that they coincide up to the required order. As for $r^*(\theta)$, we resort to equation (2.4)--(2.6) in \citet{Bar:90}. After some algebra and further expansion,
  $$r(\theta)=u(\theta)\left(
  1+\frac{1}{6}(\cancel{\ell}_3+3\cancel{\ell}_{2;1})
  (\mle-\theta)\cancel{j}^{-1}+O(n^{-1})\right)$$
so that
\begin{equation}\label{eq:bar90}
  r^*(\theta)
  =r(\theta)-\frac{1}{6}\frac{1}{r(\theta)}
  (\cancel{\ell}_3+3\cancel{\ell}_{2;1})
  (\mle-\theta)\cancel{j}^{-1}+O(n^{-1})
\end{equation}
where we have also used $\log(1+x)=x+O(x^2)$ for $|x|$ small. As for $r(b(\theta))$, a Taylor expansion around $\theta$ gives
\begin{equation}\label{eq:taylor1}
  r(b(\theta))
  =r(\theta)-\frac{1}{r(\theta)}\ell_1(\theta)
  (b(\theta)-\theta)+R_n,
\end{equation}
with $R_n$ denoting the remainder. In the one-parameter exponential family, borrowing the notation from the proof of Theorem \ref{theorem:2}, we have
\begin{equation}\label{eq:taylor2}  
  \ell_1(\theta)=\eta'(\theta)(\mle-\theta)
  =\cancel{j}(\mle-\theta)
\end{equation}  
since $\eta'(\theta)=1/\psi^{(2)}(\eta)=\sigma_\theta^{-2}=\cancel{j}$. Moreover, \eqref{eq:median} in the proof of Theorem \ref{theorem:2} implies that
\begin{equation}\label{eq:taylor3}
  b(\theta)-\theta
  =-\frac{1}{6\sqrt n}
  \rho_3\sigma_\theta+O(n^{-2})
  =\frac{1}{6}\cancel{j}^{-2}
  (\cancel{\ell}_3+3\cancel{\ell}_{2;1})+O(n^{-2})
\end{equation}
since $\cancel{\ell}_3=-2\eta''(\theta)$, $\cancel{\ell}_{2;1}=\eta''(\theta)$ and $\eta''(\theta)=-\psi^{(3)}(\eta)/\psi^{(2)}(\eta)^3=\sigma_\theta^{-2}=\cancel{j}=-\rho_3/(\sigma_\theta^3\sqrt n)$. Inserting \eqref{eq:taylor2} and \eqref{eq:taylor3} into \eqref{eq:taylor1} we obtain the same expansion in \eqref{eq:bar90} provided that the remainder $R_n$ is $O(n^{-1})$. This can be shown by using $\ell_1(\theta)=O(n^{1/2})$, $\ell_k(\theta)=O(n)$, $k\geq 2$ and $r(\theta)=O(1)$ in the normal deviation range, together with $b(\theta)-\theta=O(n^{-1})$. Hence \eqref{eq:asy_r} follows.
\end{proof of}
%
%
\begin{lemma}\label{lemma:1}
Let $\{f_n(x)\}_{n\geq 1}$ be a sequence of infinitely differentiable convex functions with minimum at $x=0$ and $f_n(0)=0$, and let $g_n(x)$ be defined by $f_n(x)=f_n(g_n(x))$. For $b_{n,k}=2f_n^{(k)}(0)/k!f_n^{(2)}(0)$, assume that, as $n\to\infty$, 
  $b_{n,k}=O(b_{n,k-1}n^{-1/2})$
for any $k\geq 3$. 
Then, $g_n(x)$ 
admits asymptotic expansion
  $$g_n(x)=-x-\sum_{k\geq 2}a_{n,k}x^k,$$
where $a_{n,2}=b_{n,3}$, $a_{n,3}=b_{n,3}^2$ and
\begin{equation}\label{eq:system}
 a_{n,k}=
 \left\{\begin{array}{ll}
 O(b_{n,k+1})
 &(k\mbox{ even}),\\
 O(b_{n,3}b_{n,k})
 &(k\mbox{ odd}).
 \end{array}\right.
\end{equation} 
\end{lemma}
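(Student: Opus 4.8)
The plan is to strip the implicit relation $f_n(g_n(x))=f_n(x)$ down to a scale-free polynomial identity, solve for the Taylor coefficients of $g_n$ through a triangular recursion, and then read off their orders by a weight count. First I would exploit convexity. Since $f_n$ is convex with $f_n(0)=f_n'(0)=0$ and $f_n^{(2)}(0)>0$ (the latter forced by $b_{n,k}$ being well defined, whence also $b_{n,2}=1$), the level sets of $f_n$ near the minimum are pairs of points, one on each side of $0$; thus $g_n$ is the reflection sending $x$ to the opposite root, with $g_n(0)=0$ and $g_n(x)=-x+o(x)$. Writing $f_n(t)=\frac12 f_n^{(2)}(0)\,\phi(t)$ with $\phi(t)=t^2+\sum_{k\geq3}b_{n,k}t^k$, the defining relation becomes $\phi(g_n(x))=\phi(x)$. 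To secure existence, smoothness, and a uniform remainder in one stroke, I would introduce the signed square root $\tau(t)=\mathrm{sign}(t)\sqrt{\phi(t)}=t\sqrt{1+\sum_{k\geq3}b_{n,k}t^{k-2}}$. For $t=O(1)$ the radicand is $1+O(n^{-1/2})$, so $\tau$ is a smooth, strictly increasing near-identity map with $\tau'(t)=1+O(n^{-1/2})$ uniformly in $n$; hence $\tau^{-1}$ has the same property, and on opposite sides of $0$ the identity $\phi(g)=\phi(x)$ is exactly $\tau(g)=-\tau(x)$, i.e. $g_n=\tau^{-1}\circ(-\mathrm{id})\circ\tau$. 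This exhibits $g_n$ as a uniformly controlled composition of near-identity maps.

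The coefficients themselves I would obtain by matching. Substituting $g_n(x)=-x-\sum_{k\geq2}a_{n,k}x^k$ into $\phi(g)=\phi(x)$ and collecting the coefficient of $x^{m+1}$, the unknown $a_{n,m}$ enters linearly and only through the $g^2$ term, since in $b_{n,k}g^k$ with $k\geq3$ it first appears at order $x^{m+k-1}\geq x^{m+2}$. This yields the triangular recursion $2a_{n,m}=b_{n,m+1}-P_m$, where $P_m$ is a polynomial in $a_{n,2},\dots,a_{n,m-1}$ and $b_{n,3},\dots,b_{n,m+1}$, determining every $a_{n,m}$ uniquely. The first two evaluations give $a_{n,2}=b_{n,3}$ and $a_{n,3}=b_{n,3}^2$, matching the statement.

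For the order estimates I would run a weight count alongside the recursion. Chaining $b_{n,k}=O(b_{n,k-1}n^{-1/2})$ down to $b_{n,2}=1$ gives $b_{n,k}=O(n^{-(k-2)/2})$, so I assign weight $k-2$ to $b_{n,k}$ and weight $k-1$ to $a_{n,k}$. A direct inspection of the recursion shows that every monomial contributing to $a_{n,m}$ — the term $b_{n,m+1}$, the products $a_{n,i}a_{n,j}$ with $i+j=m+1$ coming from $g^2$, and the terms $b_{n,k}\prod_i a_{n,j_i}$ coming from $g^k$ — carries total weight exactly $m-1$. By induction $a_{n,m}=O(n^{-(m-1)/2})$, which is precisely the order of the representative weight-$(m-1)$ monomials $b_{n,m+1}$ (for even $m$) and $b_{n,3}b_{n,m}$ (for odd $m$) displayed in \eqref{eq:system}.

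The main obstacle is not the formal algebra but promoting it to a genuine asymptotic expansion valid uniformly for $x=O(1)$, i.e. showing that truncating after the $x^{K}$ term leaves a remainder of order $n^{-K/2}$. This is exactly what the $\tau$-representation delivers: because $\tau$ and $\tau^{-1}$ are near-identity with uniformly bounded higher derivatives, the tail $\sum_{k>K}a_{n,k}x^k$ is governed geometrically by the weight count and is $O(n^{-K/2})$ for $x=O(1)$. In particular, retaining only the first two corrections gives $g_n(x)=-x-b_{n,3}x^2-b_{n,3}^2x^3+O(n^{-3/2})$ for $x=O(1)$, which is the form invoked in the proofs of Theorems \ref{theorem:1} and \ref{theorem:2}.
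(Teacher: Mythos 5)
Your proposal is correct, and its engine is the same as the paper's: substitute the ansatz $g_n(x)=-x-\sum_{k\geq 2}a_{n,k}x^k$ into $f_n(g_n(x))=f_n(x)$ and equate coefficients, which is exactly how the paper generates its system of equations. Where you differ is in everything surrounding that computation. The paper writes out the full system, solves explicitly for $a_{n,2},\dots,a_{n,5}$ (obtaining $a_4=b_5-2b_3b_4+2b_3^3$ and $a_5=3b_3b_5-6b_3^2b_4+4b_3^4$), and then asserts that ``an argument by induction'' gives \eqref{eq:system}; you instead isolate $a_{n,m}$ inside the $g^2$ term to get the triangular recursion $2a_{n,m}=b_{n,m+1}-P_m$ and close the induction by a weight count (weight $k-2$ for $b_{n,k}$, weight $k-1$ for $a_{n,k}$, every monomial in the equation for $a_{n,m}$ having weight exactly $m-1$), which actually supplies the induction step the paper only gestures at. Two remarks. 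First, your weight count proves $a_{n,k}=O(n^{-(k-1)/2})$, the power-of-$n$ reading of \eqref{eq:system}; it does not reproduce the literal even/odd form $O(b_{n,k+1})$ versus $O(b_{n,3}b_{n,k})$. But that literal form does not follow from the hypothesis alone anyway, since the hypothesis only bounds the $b$'s from above: if $b_{n,5}$ is anomalously small, the paper's own formula $a_4=b_5-2b_3b_4+2b_3^3$ shows $a_4$ need not be $O(b_5)$. So your version is the defensible one, and the $n$-power rate is all that the proofs of Theorems \ref{theorem:1} and \ref{theorem:2} invoke. Second, your additions---well-definedness of $g_n$ via convexity, and the conjugation $g_n=\tau^{-1}\circ(-\mathrm{id})\circ\tau$ with $\tau(t)=\mathrm{sign}(t)\sqrt{\phi(t)}$ used to promote the formal series to a uniform remainder for $x=O(1)$---address real issues that the paper passes over in silence. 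Be aware, though, that this uniformity argument is only sketched, and, like the paper, it implicitly treats $f_n$ as given by its Taylor series with uniform-in-$k$ control of the $b_{n,k}$, whereas the stated hypothesis constrains derivatives at $0$ only, with constants that may depend on $k$; so neither your proof nor the paper's fully resolves that point, and you are not behind the paper's own standard of rigor there.
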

%
%
\begin{proof}
We omit the subscript $n$ for ease of notation. Taylor expansion of $f$ at $x=0$ gives
 $f(x)=(1/2)f^{(2)}(0)(x^2+
 b_3x^3+\ldots+b_{k}x^k+\ldots)$.
Substitute
 $g(x)=-x-a_2x^2-\ldots,$
into $f(x)=f_n(g_n(x))$ and equate coefficients of successive order to obtain 
  $$\left\{\begin{array}{ll}
  b_3=&(a_1a_2+a_2a_1)-b_3\\
  b_4=&(a_1a_3+a_2a_2+a_3a_1)-b_3(a_1a_1a_2+a_1a_2a_1+a_2a_1a_1)+b_4\\
  \ldots&\ldots\\
  b_k=&\sum_{i_1+i_2=k}a_{i_1}a_{i_2}
  -b_3\sum_{i_1+i_2+i_3=k}a_{i_1}a_{i_2}a_{i_3}
  +b_4\sum_{i_1+\ldots+i_4=k}a_{i_1}\cdots a_{i_4}\\
  &+\ldots+
  (-1)^{k-1}b_{k-1}
  \sum_{i_1+\ldots+i_{k-1}=k}a_{i_1}\cdots a_{i_{k-1}}
  +(-1)^{k}b_k
  \end{array}\right.$$
where the $i_j$'s are positive integers and we set $a_1=1$ for notational convenience. Rearranging terms, the first $4$ equations are
  $$\left\{\begin{array}{ll}
  b_3=&-b_3 + 2a_1a_2\\
  b_4=&+b_4-b_3(3a_1^2a_2) + 2a_1a_3+a_2^2\\
  b_5=&-b_4+b_4(4a_1^3a_2)
  -b_3(3a_1^2a_3+3a_1a_2^2) + 2a_1a_4+2a_2a_3\\
  b_6=&+b_6-b_5(5a_1^4a_2)+b_4(4a_1^3a_3+6a_1^2a_2^2)\\
  & -b_3(3a_1^2a_4+6a_1a_2a_3+a_2^3)
  + 2a_1a_5+2a_2a_4+a_3^2
  \end{array}\right.$$
A similar expression for $b_k$ can be given by means of multinomial coefficients. Now substitute back $a_1=1$,
and solve for $a_2,a_3,a_4,a_5$ to get $a_2=b_3$, $a_3=b_3^2$ and 
  $$\left\{\begin{array}{lll}
  a_4=&b_5-2b_3b_4+2b_3^3&=O(b_5)
  \\
  a_5=&3b_3b_5-6b_3^2b_4+4b_3^4&=O(b_3b_5)
  \end{array}\right.$$
where the order of asymptotics of $a_4$ and $a_5$ are determined by the hypothesis $b_k=O(b_{k-1}n^{-1/2})$. An argument by induction leads to \eqref{eq:system}.
\end{proof}


\section*{Acknowledgements}
The authors are grateful to two reviewers for comments that have helped to improve the paper substantially. Special thanks are also due to Igor Pr\"unster and to Mattia Ciollaro for comments on an earlier version of this work. P. De Blasi was supported by the European Research Council (ERC) through StG \comillas{N-BNP} 306406.


\end{document}